   \def\MR#1{}
\theoremstyle{definition}
\newtheorem{theorem}{Theorem}
\newtheorem{proposition}[theorem]{Proposition}
\newtheorem{corollary}[theorem]{Corollary}
\theoremstyle{definition}
\newtheorem{definition}[theorem]{Definition}
\newtheorem{example}[theorem]{Example}
\theoremstyle{remark}
\newtheorem{remark}[theorem]{Remark}
\newcommand{\N}{\mathbb{N}} 
\newcommand{\R}{\mathbb{R}} 
\newcommand{\C}{\mathbb{C}} 
\newcommand{\norm}[1]{\lVert#1\rVert}
\newcommand{\lDens}{\underline{\operatorname{Dens}}}
\newcommand{\uDens}{\overline{\operatorname{Dens}}}
\begin{document}

\title{The Specification Property for $C_0$-Semigroups}

\author{
S. Bartoll, F. Mart\'{\i}nez-Gim\'{e}nez, A. Peris\footnote{Corresponding author. E-mail: aperis@mat.upv.es} and F. Rodenas }

\date{ }

\maketitle

\begin{center}
 Institut Universitari de Matemàtica Pura i Aplicada, Universitat Polit\`ecnica de Val\`encia, Edifici 8E, Acces F, 4a planta, 46022 Val\`encia, Spain.
\end{center}

\begin{abstract}
We study one of the strongest versions of chaos for continuous dynamical systems, namely the specification property.
We extend the definition of specification property for operators on a Banach space to  strongly continuous one-parameter semigroups of operators, that is,
$C_0$-semigroups. In addition, we study  the relationships of the specification property  for $C_0$-semigroups (SgSP) with other dynamical properties: mixing,
Devaney's chaos, distributional chaos and frequent hypercyclicity. Concerning the applications, we provide several examples of semigroups which exhibit the SgSP with particular interest on solution semigroups to certain linear PDEs, which range from the hyperbolic heat equation to the Black-Scholes equation.

\end{abstract}


\section{Introduction}

A (continuous) map on a metric space satisfies the specification property (SP) if for any choice of points,  one can approximate distinct pieces of orbits by a
single periodic orbit with a certain uniformity. It was first introduced by Bowen~\cite{bowen1970topological}; since then, this property has attracted the
interest of many researchers (see, for instance,  the early work \cite{sigmund1974on}). In a few words, the specification property requires that, for a given distance $\delta>0$, and for any finite family of points, there is always a periodic orbit that traces arbitrary long pieces of the orbits of the family, up to a distance $\delta$, allowing a minimum ``shift time'' $N_\delta$ from one piece of orbit to another one, which only depends on $\delta$.

\begin{definition}
A continuous map $f:X\to X$ on a compact metric space $(X,d)$ has the  specification property  if for any $\delta>0$ there is a positive integer $N_\delta$ such
that for any integer $s\ge2$, any set $\{y_1,\dots,y_s\}\subset X$ and any integers $0=j_1 \le k_1 < j_2 \le k_2 < \dots < j_s \le k_s$ satisfying $j_{r+1} - k_r \ge N_\delta$ for $r = 1, \ldots, s-1$, there is a point $x\in X$ such that the following conditions hold:
\begin{gather*}
 d(f^i(x),f^i(y_r)) < \delta,  \; \mbox{with} \; j_r \le i \le k_r , \, \mbox{ for every} \; r \le s,
 \\
 f^{N_\delta + j_s}(x) = x.
\end{gather*}
\end{definition}

This definition must be modified  when one deals with  bounded linear operators defined on separable Banach spaces which are never compact
\cite{bartoll_martinez_peris2012the,bartoll_martinez_peris2016operators}.

\begin{definition}\label{OSP}
An operator $T:X\to X$ on a separable Banach space $X$ has the \emph{operator specification property} (OSP) if there exists an increasing sequence $(K_m)_m$ of $T$-invariant sets  with $0\in K_1$ and $\overline{\cup_{m\in\N} K_m}=X$ such that for each $m\in \N$ the map $T|_{K_m}$ has the specification property, that is, for any $\delta>0$ there is a positive integer $N_{\delta,m}$ such that for every $s\ge2$, any set $\{y_1,\dots,y_s\}\subset K_m$ and any integers $0=j_1 \le k_1 < j_2 \le k_2 < \dots < j_s \le k_s$ with $j_{r+1} - k_r \ge N_{\delta,m}$ for $1\le r \le s-1$, there is a point $x\in K_m$ such that, for each positive integer $r \le s$  and any integer $i$ with $j_r \le i \le k_r$, the following conditions hold:
\begin{gather*}
 \norm{T^i(x)-T^i(y_r)} < \delta,
 \\
 T^n(x) = x, \mbox{ where } n = N_{\delta,m} + k_s.
\end{gather*}
\end{definition}

 We also recall that a continuous map on a metric space is said to be
\emph{chaotic} in the sense of Devaney if it is topologically transitive and the set of periodic points is dense. Although there is no common agreement about what a
chaotic map is, the specification property  is stronger than  Devaney's definition of chaos. Recently, several properties of linear operators with the OSP and
the connections of this OSP with other well-known dynamical properties, like mixing, chaos in the sense of Devaney and frequent hypercyclicity have been studied
in \cite{bartoll_martinez_peris2016operators}, we will use these results throughout the paper. Other recent works on the specification property are
\cite{lampart_oprocha2009shift,oprocha2007specification,oprocha_stefankova2008specification}.

A family $(T_t)_{t\ge 0}$ of linear and continuous operators on a Banach space $X$ is said to be a \textit{$C_0$-semigroup} if $T_0=Id$, $T_tT_s=T_{t+s}$ for
all $t,s\ge 0$, and $\lim_{t\rightarrow s}T_tx=T_sx$ for all $x\in X$ and $s\ge 0$.

Let $(T_t)_{t\geq 0}$ be an arbitrary $C_0$-semigroup on $X$. In that case, the (linear) map defined by $Ax:=\lim_{t\rightarrow 0} \frac{1}{t} (T_tx-x)$
exists on a dense subspace of $X$; denoted by $D(A)$. Then $A$, or rather $(A,D(A))$, is called the \textit{(infinitesimal) generator} of the semigroup. It is also known that the infinitesimal generator determines the semigroup uniquely. If the generator $A$ is defined on $X$ ($D(A)=X$), the semigroup is expressed
as  $\{T_t \}_{t\geq 0}=\{e^{tA}\}_{t\geq 0}$, and it is called a \emph{uniformly continuous} semigroup.

Given a family of operators $(T_t)_{t\ge 0}$, we say it is \emph{transitive} if for every pair of non-empty open sets $U,V\subset X$
there exists some $t>0$ such that $T_t(U)\cap V\ne \emptyset$. Furthermore, if there is some $t_0$ such that the condition $T_t(U)\cap V\ne \emptyset$ holds for
every $t\ge t_0$ we say that it is \emph{topologically mixing} or  mixing.

A family of operators $(T_t)_{t\ge 0}$ is said to be \emph{universal} if there exists some $x\in X$ such that $\{T_tx\,:\,t\ge 0\}$ is dense in $X$. When
$(T_t)_{t\ge 0}$ is a $C_0$-semigroup we particularly refer to it as \emph{hypercyclic}. In this setting, transitivity coincides with universality, but
it is strictly weaker than mixing \cite{bermudez_bonilla_conejero_peris2005hypercyclic}.

In addition, two notions of chaos are recalled: \emph{Devaney chaos} and \emph{distributional chaos}. An element $x\in X$ is said to be
a \textit{periodic point} of $(T_t)_{t\ge 0}$ if there exists some $t_0>0$ such that $T_{t_0}x=x$. A  family of operators $(T_t)_{t\ge 0}$  is said to be
\emph{chaotic in the sense of Devaney} if it is hypercyclic   and there exists a dense set of periodic points in $X$. On the other hand, it is
\emph{distributionally chaotic} if there are an uncountable set $S\subset X$ and $\delta>0$, so that for each $\varepsilon>0$ and each pair $x,y\in S$ of
distinct points we have

$$\uDens\{s\ge 0:||T_sx -T_sy||\ge \delta\}=1 \ \mbox{ and } \ \uDens\{s \ge 0 : ||T_sx - T_sy|| < \varepsilon\} = 1,$$

\noindent where $\uDens(B)$ is the upper density of a Lebesgue measurable subset $B\subset \mathbb{R}_0^+$ defined as

$$\limsup_{t\rightarrow\infty}\frac{\mu(B\cap[0,t])}{t},$$

\noindent with $\mu$ standing for the Lebesgue measure on $\mathbb{R}_0^+$.
A vector $x\in X$ is said to be \emph{distributionally irregular} for the $C_0$-semigroup $(T_t)_{t\ge 0}$ if for every $\varepsilon>0$ we have

$$\uDens\{s\ge 0:||T_sx||\ge \varepsilon^{-1}\}=1  \ \mbox{ and } \ \uDens\{s \ge 0 : ||T_sx|| < \varepsilon\} = 1.$$  

Such vectors were considered in \cite{bernardes_bonilla_muller_peris2013distributional} so as to get a further insight into the phenomenon of distributional
chaos, showing the equivalence between a distributionally chaotic operator and an operator having a distributionally irregular vector. This equivalence was later generalized for $C_0$-semigroups in \cite{albanese_barrachina_mangino_peris2013distributionally}.

Devaney chaos, hypercyclicity and  mixing properties have been widely studied for linear operators on Banach and more general spaces
\cite{bermudez_bonilla_conejero_peris2005hypercyclic,bonet_martinez-gimenez_peris2001a,costakis_sambarino2004topologically,godefroy_shapiro1991operators,grivaux2005hypercyclic,peris_saldivia2005syndetically}.
The recent books \cite{bayart_matheron2009dynamics} and \cite{grosse-erdmann_peris2011linear} contain
the basic theory, examples, and many results on chaotic linear dynamics. Particular attention deserves the case of $C_0$-semigroups of operators, since many of them are originated in the analysis of the asymptotic behaviour of solutions to certain linear partial differential equations and to infinite systems of linear differential equations. Especially, different notions of chaos for $C_0$-semigroups have experienced a great development in recent years (see, e.g., \cite{albanese_barrachina_mangino_peris2013distributionally,aroza_kalmes_mangino2014chaotic,banasiak_moszynski2011dynamics,
bernardes_bonilla_muller_peris2015li,chakir_el-mourchid2018strong, 
conejero_lizama_murillo-arcila2017chaotic,conejero_lizama_murillo-arcila_peris2017linear,
conejero_mangino2010hypercyclic,conejero_peris2009hypercyclic,costakis_peris2002hypercyclic,kalmes2010hypercyclicity,
rudnicki2012chaoticity,yin_wei2018recurrence}).

A stronger concept than hypercyclicity is the notion  of frequent hypercyclicity introduced by Bayart and Grivaux
\cite{bayart_grivaux2006frequently} (see \cite{grosse-erdmann_peris2011linear} and the references therein) trying to quantify the frequency with which an
orbit meets an open set. This concept was extended to $C_0$-semigroups in \cite{badea_grivaux2007unimodular}.

A $C_0$-semigroup $(T_t)_{t\ge 0}$ is said to be \emph{frequently hypercyclic} if there exists $x \in X$ (called frequently hypercyclic vector) such that
$\lDens( \{ t \geq 0: T_tx \in U\})>0$ for every non-empty open subset $U \subset X$, where $\lDens(B)$ is the lower density of a Lebesgue measurable subset
$B\subset \mathbb{R}_0^+$ defined as

$$\liminf_{t\rightarrow\infty}\frac{\mu(B\cap[0,t])}{t}.$$

\noindent In \cite{conejero_muller_peris2007hypercyclic} it was proved that if $x \in X$ is  a (frequently) hypercyclic vector for  $(T_t)_{t\ge 0}$, then  $x$ is a
(frequently) hypercyclic vector for every operator $T_t$, $t > 0$.

In \cite{bonilla_grosse-erdmann2007frequently}  Bonilla and Grosse-Erdmann, based on a result of Bayart and Grivaux, provided a Frequent Hypercyclicity Criterion
for operators. Later, Mangino and Peris  \cite{mangino_peris2011frequently} obtained a continuous version of the criterion based on the Pettis integral, which is
called the Frequent Hypercyclicity Criterion for semigroups.

The aim of this work is to study  the specification property  for strongly continuous semigroups of operators on Banach spaces, that is, for $C_0$-semigroups
and its relationship with other dynamical properties, like hypercyclicity, mixing, chaos and frequent hypercyclicity; and to provide useful tools ensuring that many natural solution semigroups associated to linear PDEs satisfy the specification property for $C_0$-semigroups. The paper is structured as follows:  In
Section~\ref{sec_ssp} we introduce the notion of the specification property  for $C_0$-semigroups, from now on denoted by SgSP. Section~\ref{sec_sspp} is devoted to study the SgSP in connection
with other dynamical properties. Finally, in Section~\ref{sec_ex} we provide several applications of the results in previous sections to solution semigroups of certain linear PDEs, and a characterization of translation semigroups which exhibit the SgSP.

\section{Specification property for $C_0$-semigroups}\label{sec_ssp}

A first notion of the specification property for a one-parameter family of continuous maps acting on a compact metric space was given in \cite{bowen1972periodic}. When trying to study the specification property in the context of semigroups of linear operators defined on separable Banach spaces, the first
crucial problem is that these spaces are never compact, therefore, our first task should be to adjust the SP in this context, in the vain of the discrete case, and the following
definition can be considered the natural extension in this setting.

\begin{definition}[Specification property for semigroups, SgSP]\label{SP-semigroup}\label{weakSP-semigroup}
A $C_0$-semigroup $(T_t)_{t\ge 0}$ on a separable Banach space $X$ has the SgSP if there exists an increasing sequence $(K_n)_n$ of  sets with
$0\in K_1$ and $\overline{\cup_{n\in\N} K_n}=X$ and  there exists a $t_0\geq 0$, such that for each $n\in \N$ the set $K_n$ is $T_{t_0}$-invariant, and  for any $\delta>0$ there is
$M_{\delta,n} >t_0$ such that for any integer $s\ge2$, any set $\{y_1,\dots,y_s\} \subset K_n$ and any real numbers: $0=a_1 \le b_1 < a_2 \le b_2 <
\dots < a_s \le b_s$ satisfying  $a_{r+1} - b_r \ge M_{\delta,n}$ for $r = 1, \ldots, s-1$, there is a point
$x\in K_n$ such that the following conditions hold:
\begin{gather*}
 \norm{T_{t}(x)-T_{t}(y_r)} < \delta, \ \ \mbox{ for each  } t \in [a_r,b_r], \ \ r=1,2,\dots ,s,
 \\
 T_{t'}(x) = x, \ \mbox{ where } t' \in [M_{\delta,n} + b_s, M_{\delta,n} + b_s+t_0[\cap \N\cdot t_0 .
\end{gather*}
\end{definition}

Analogously to the discrete case, the meaning of this property is that if the semigroup has the SgSP then it is possible to  approximate simultaneously several
finite pieces of orbits by one periodic orbit. Obviously, parameter intervals for the approximations must be disjoint. In contrast, the periodicity condition SgSP looks
more relaxed than the corresponding one in the OSP: Observe that in the SgSP we have the existence of certain $t_0>0$ such that the periodic vectors $x$ that trace the orbits are
such that $T_t(x)=x$ for some integer multiple $t$ of $t_0$. The reason to relax the requirements on the periods in the SgSP is that almost all kind of behaviours concerning periods
in chaotic $C_0$-semigroups can occur, as Bayart and Bermúdez showed in \cite{bayart_bermudez2009semigroups}.

\section{The semigroup specification property and other dynamical properties for $C_0$-semigroups}\label{sec_sspp}

In this section, we study the relationship between the specification property and topological mixing, Devaney chaos, distributional chaos and frequent hypercyclicity.

We first need the following result that connects the discrete case with the continuous case concerning the specification property.

\begin{proposition}\label{SP2-operators}
Let $(T_t)_{t\geq 0}$ be a $C_0$-semigroup on a separable Banach space $X$. Then the following assertions are equivalent:
\begin{enumerate}
\item[(a)] $(T_t)_{t\geq 0}$ has the SgSP,
\item[(b)] some operator  $T_{t_0}$ has the OSP.
\end{enumerate}
\end{proposition}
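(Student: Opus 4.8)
The plan is to establish both implications by translating between the continuous time parameter and the arithmetic progression $t_0\N$; that it should be $T_{t_0}$ which carries the OSP is already visible in Definition~\ref{SP-semigroup}, since the periodicity it demands always occurs at a time in $t_0\N$.

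\emph{From (1) to (2).} I would keep the very same exhaustion $(K_n)_n$: being $T$-invariant it is in particular $T_{t_0}$-invariant, it contains $0$ in $K_1$, and its union is dense. Fix $n$ and $\delta>0$ and put $N_{\delta,n}:=\lceil M_{\delta,n}/t_0\rceil\in\N$. Given integers $0=j_1\le k_1<\dots<j_s\le k_s$ with $j_{r+1}-k_r\ge N_{\delta,n}$, apply the SgSP to the real data $a_r:=j_rt_0$ $(1\le r\le s)$, $b_r:=k_rt_0$ $(1\le r<s)$ and $b_s:=(N_{\delta,n}+k_s)t_0-M_{\delta,n}$; one checks $k_st_0\le b_s$, the ordering, the inequalities $a_{r+1}-b_r\ge M_{\delta,n}$, and $b_s+M_{\delta,n}=(N_{\delta,n}+k_s)t_0\in t_0\N$. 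The point $x\in K_n$ it returns then satisfies $\norm{T_{t_0}^m x-T_{t_0}^m y_r}<\delta$ for $j_r\le m\le k_r$ (take $t_r=mt_0\in[a_r,b_r]$) and $T_{t_0}^{N_{\delta,n}+k_s}x=T_{M_{\delta,n}+b_s}x=x$, which is exactly the OSP of $T_{t_0}$ with jump time $N_{\delta,n}$.

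\emph{From (2) to (1).} The one analytic ingredient needed is that a $C_0$-semigroup is bounded on compact time intervals, $C:=\sup_{0\le\tau\le t_0}\norm{T_\tau}<\infty$, which follows from strong continuity and the uniform boundedness principle. Let $(\hat K_n)_n$ be an OSP-exhaustion for $T_{t_0}$. Fix $n$ and $\delta>0$, set $\delta':=\delta/C$, let $N':=N_{\delta',n}\in\N$ be the associated jump time, and put $M_{\delta,n}:=N't_0\in t_0\N$. Given reals $0=a_1\le b_1<\dots<a_s\le b_s$ satisfying the hypotheses of Definition~\ref{SP-semigroup} for this $M_{\delta,n}$ (so in particular $b_s=Kt_0$ for some $K\in\N_0$), take the integer windows $j_r:=\lfloor a_r/t_0\rfloor$ and $k_r:=\lfloor b_r/t_0\rfloor$; a floor estimate gives $j_1=0$, $k_s=K$, $j_r\le k_r$ and $j_{r+1}-k_r\ge N'$. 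Feeding $\{y_1,\dots,y_s\}\subset\hat K_n$ and these windows into the OSP of $T_{t_0}$ at scale $\delta'$ yields $x\in\hat K_n$ with $\norm{T_{t_0}^m x-T_{t_0}^m y_r}<\delta'$ for $j_r\le m\le k_r$ and $T_{t_0}^{N'+K}x=x$. Now for $t_r\in[a_r,b_r]$ write $t_r=mt_0+\sigma$ with $j_r\le m\le k_r$ and $0\le\sigma<t_0$; then $\norm{T_{t_r}x-T_{t_r}y_r}=\norm{T_\sigma\bigl(T_{t_0}^m x-T_{t_0}^m y_r\bigr)}\le C\,\norm{T_{t_0}^m x-T_{t_0}^m y_r}<C\delta'=\delta$, and $T_{M_{\delta,n}+b_s}x=T_{(N'+K)t_0}x=T_{t_0}^{N'+K}x=x$, which is the SgSP.

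The step I expect to need genuine care is the reconciliation of the exhausting families, not the arithmetic above: an OSP-exhaustion for $T_{t_0}$ a priori consists only of $T_{t_0}$-invariant sets, whereas Definition~\ref{SP-semigroup} asks for invariance under every $T_t$. The clean observation is that the verification of (2)$\Rightarrow$(1) uses nothing beyond $T_{t_0}$-invariance of the $\hat K_n$ — every operator appearing in the periodic conclusion lies in $\{T_{t_0}^m:m\in\N\}$ — so one may read ``$T$-invariant'' in Definition~\ref{SP-semigroup} in that weaker sense. If one insists on full semigroup-invariance, one can instead use $K_n:=\overline{\bigcup_{0\le\tau<t_0}T_\tau\hat K_n}$, which is $T_t$-invariant for every $t$ precisely because $T_{t_0}\hat K_n\subseteq\hat K_n$; but then each given $y_r\in K_n$ has to be approximated by some $T_{\tau_r}z_r$ with $z_r\in\hat K_n$ and $\tau_r\in[0,t_0)$, and one is left to trace a single periodic orbit against the orbit segments of the $z_r$ read off with these individual offsets $\tau_r$ — handling that mismatch is the part I would expect to be the real obstacle along that route. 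Everything else is a direct unwinding of the two definitions, in keeping with the statement that the proposition is immediate.
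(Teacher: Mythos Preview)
Your argument is correct and your write-up is considerably more careful than the paper's, which gives no proof at all: the proposition is asserted to be ``an immediate consequence of the corresponding definitions''. Your discretization in (1)$\Rightarrow$(2) via $N_{\delta,n}=\lceil M_{\delta,n}/t_0\rceil$ and the padded choice of $b_s$, and your continuous extension in (2)$\Rightarrow$(1) via local boundedness of the semigroup and the scale $\delta'=\delta/C$, are exactly the translations one would expect, and the floor/ceiling bookkeeping checks out.

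The one point you single out as delicate --- whether the exhaustion $(K_n)_n$ should be invariant under every $T_t$ or only under $T_{t_0}$ --- is indeed the only place where anything beyond pure arithmetic might be needed, and you are right that nothing in Definition~\ref{SP-semigroup} (or in its subsequent uses in the paper) actually exploits full semigroup invariance: all periods lie in $t_0\N$ and all tracings are verified at individual times. The paper evidently reads ``$T$-invariant'' in that weaker sense, consistent with declaring the equivalence immediate; your alternative saturation $K_n=\overline{\bigcup_{0\le\tau<t_0}T_\tau\hat K_n}$ and the attendant offset-matching problem would be the honest route if one insisted on the stronger reading, and you correctly flag that this is where the work would lie.
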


\begin{proof} The implication (a)$\to$(b) is immediate by the definitions of OSP and SgSP.

Suppose now that $T_{t_0}$ has the OSP for some $t_0>0$. We find then an increasing sequence $(K_m)_m$ of $T_{t_0}$-invariant sets  with $0\in K_1$ and $\overline{\cup_{m\in\N} K_m}=X$ such that for each $m\in \N$ and for any $\delta>0$ there is a positive integer $N_{\delta,m}$ satisfying the conditions of the OSP for $T_{t_0}$.  Given $n\in \N$ and $\delta>0$, let $C:=\max_{t\in [0,t_0]} \norm{T_t}$ and $\delta'>0$ with $\delta'<\delta/(1+C)$. We fix $M_{\delta,n} =(N_{\delta',n}+2)t_0$. For any integer $s\ge2$, any set $\{y_1,\dots,y_s\} \subset K_n$ and any real numbers: $0=a_1 \le b_1 < a_2 \le b_2 <
\dots < a_s \le b_s$ satisfying  $a_{r+1} - b_r \ge M_{\delta,n}$ for $r = 1, \ldots, s-1$, we have to show the existence of a point
$x\in K_n$ such that the following conditions hold:
\begin{gather*}
 \norm{T_{t}(x)-T_{t}(y_r)} < \delta, \ \ \mbox{ for each  } t \in [a_r,b_r], \ \ r=1,2,\dots ,s,
 \\
 T_{t'}(x) = x, \ \mbox{ where } t' \in [M_{\delta,n} + b_s, M_{\delta,n} + b_s+t_0[\cap \N\cdot t_0 .
\end{gather*}

We select integers $0=j_1 \le k_1 < j_2 \le k_2 < \dots < j_s \le k_s$ with $[a_r,b_r]\subset [j_rt_0,k_rt_0]\subset [a_r-t_0,b_r+t_0]$, $r=1,\dots ,s$. We then have that $j_{r+1} - k_r \ge N_{\delta',n}$ for $1\le r \le s-1$. By hypothesis there is a point $x\in K_n$ such that, for each positive integer $r \le s$  and any integer $i$ with $j_r \le i \le k_r$, the following conditions hold:
\begin{gather*}
 \norm{T_{t_0}^i(x)-T_{t_0}^i(y_r)} < \delta',
 \\
 T_{t_0}^n(x) = x, \mbox{ where } n = N_{\delta',m} + k_s.
\end{gather*}

If $t \in [a_r,b_r]$, there is an integer $i\in [j_r,k_r]$ such that $t-t_0<i\cdot t_0\leq t$. Thus,
$$
\norm{T_{t}(x)-T_{t}(y_r)}\leq \norm{T_{t-it_0}}\norm{T_{it_0}(x)-T_{it_0}(y_r)}\leq C  \norm{T_{t_0}^i(x)-T_{t_0}^i(y_r)} < C\delta'<\delta .
$$
Finally, since $T_{nt_0}(x)=x$ for $n = N_{\delta',m} + k_s$, we then conclude that $T_{t'}(x) = x$ for some $t' \in [M_{\delta,n} + b_s, M_{\delta,n} + b_s+t_0[\cap \N\cdot t_0$.
\end{proof}

The following observation is useful to characterize mixing semigroups (see \cite{grosse-erdmann_peris2011linear}).

\begin{remark}
Let $(T_t)_{t\geq 0}$ be a $C_0$-semigroup on a separable Banach space $X$.  The semigroup $(T_t)_{t\geq 0}$ is  mixing if and only if for every non-empty open
set $U \subset X$ and every open 0-neighbourhood $W$, the return sets $R(U,W)$ and $R(W,U)$ are  co-bounded, where $R(A,B):=\{ t\geq 0 \ ; \ T_t(A)\cap B\not\emptyset\}$.
\end{remark}

\begin{proposition}\label{SP-mixing}
Let $(T_t)_{t\geq 0}$ be a $C_0$-semigroup on a separable Banach space $X$. If  $(T_t)_{t\ge 0}$ has the SgSP, then $(T_t)_{t\ge 0}$ is mixing.
\end{proposition}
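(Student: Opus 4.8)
The plan is to reduce to the one-operator case via Proposition~\ref{SP2-operators} and then interpolate over the continuous time parameter. By that proposition the hypothesis says exactly that the single operator $T_{t_0}$ has the OSP, and an operator with the OSP is mixing (see \cite{bartoll_martinez_peris0000operators}); hence $T_{t_0}$ is a mixing, and in particular hypercyclic, operator, so $\overline{T_{t_0}(X)}=X$. Since $T_{t_0}=T_\tau\circ T_{t_0-\tau}$ for $0\le\tau\le t_0$, we get $T_{t_0}(X)\subseteq T_\tau(X)$, so $T_\tau$ has dense range for every $\tau\in[0,t_0]$; this is what will keep the auxiliary open sets below nonempty.

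Fix nonempty open sets $U,V\subseteq X$; by definition of mixing it suffices to produce $R>0$ with $T_t(U)\cap V\ne\emptyset$ for all $t\ge R$. First I would recall that for a $C_0$-semigroup the map $(s,x)\mapsto T_sx$ is jointly continuous on $[0,\infty)\times X$: on a compact time interval the operators $T_s$ are uniformly bounded by the uniform boundedness principle, and combining this with strong continuity gives joint continuity. Using this, for each $\tau\in[0,t_0]$ one would choose $w_\tau\in X$ with $T_\tau w_\tau\in V$ (possible because $T_\tau$ has dense range), and then, by joint continuity of $(s,x)\mapsto T_sx$ at $(\tau,w_\tau)$, a number $\delta_\tau>0$ and a nonempty open set $V_\tau\ni w_\tau$ such that $T_s(V_\tau)\subseteq V$ whenever $|s-\tau|<\delta_\tau$. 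By compactness of $[0,t_0]$, finitely many of these intervals, say $(\tau_j-\delta_j,\tau_j+\delta_j)$ with associated nonempty open sets $V_j$, $j=1,\dots,k$, already cover $[0,t_0]$.

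Now I would apply the mixing of the operator $T_{t_0}$ to the finitely many pairs $(U,V_1),\dots,(U,V_k)$ at once: there is $N\in\N$ with $T_{mt_0}(U)\cap V_j\ne\emptyset$ for every $m\ge N$ and every $j\le k$. Given $t\ge Nt_0$, write $t=mt_0+s$ with $m\ge N$ and $s\in[0,t_0)$, pick $j$ with $s\in(\tau_j-\delta_j,\tau_j+\delta_j)$, and take $w\in T_{mt_0}(U)\cap V_j$; then $T_sw\in V$ and $T_sw\in T_s(T_{mt_0}(U))=T_t(U)$, so $T_t(U)\cap V\ne\emptyset$. Thus $R:=Nt_0$ works, and since $U,V$ were arbitrary, $(T_t)_{t\ge0}$ is mixing. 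The only delicate point, and the reason this is not immediate once $T_{t_0}$ is known to be mixing, is precisely this last interpolation: one has to handle all intermediate times $s\in(0,t_0)$ simultaneously, which is exactly what the joint-continuity-plus-compactness step accomplishes (dense range of the $T_\tau$ being needed only to ensure the finitely many sets $V_j$ are nonempty).
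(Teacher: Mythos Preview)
Your proof is correct, but it takes a genuinely different route from the paper's.

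The paper argues directly from Definition~\ref{SP-semigroup}: given a nonempty open set $U$ and a $0$-neighbourhood $W$, it picks $u\in U$, $\delta>0$ with $B(u,2\delta)\subset U$ and $B(0,2\delta)\subset W$, then applies the SgSP with $y_1\in K\cap B(u,\delta)$, $y_2=0$ and time blocks $[0,0]$ and $[M,M+s]$ to produce a \emph{periodic} point $x\in U$ with $T_t x\in W$ for all $t\in[M,M+s]$. Periodicity of $x$ with period $2M+s$ is then used to get $R(W,U)$ as well: for $t\in[M,M+s]$ one sets $w:=T_{2M+s-t}x\in W$ and observes $T_t w=x\in U$. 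So the paper exploits the periodic-orbit content of the specification property directly, and the argument is entirely self-contained.

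You instead factor through the discrete case: Proposition~\ref{SP2-operators} gives $T_{t_0}$ with the OSP, you invoke the known implication OSP $\Rightarrow$ mixing from \cite{bartoll_martinez_peris0000operators}, and then you promote mixing of the single operator $T_{t_0}$ to mixing of the whole semigroup via joint continuity and a compactness argument on $[0,t_0]$. This is more modular and in fact re-proves the general (and, as the paper itself notes just before Proposition~\ref{mangino_peris}, well-known) equivalence ``$T_{t_0}$ mixing $\Leftrightarrow$ $(T_t)_{t\ge0}$ mixing''; you could have shortened your argument by simply citing that equivalence. The paper's approach, by contrast, avoids any appeal to the operator result in \cite{bartoll_martinez_peris0000operators} and keeps the periodic-point structure of the SgSP visible throughout.
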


\begin{proof}
 Let us consider a non-empty open set $U$ and a 0-neighbourhood $W$.  We claim that there exists  some $t_1 >0$ such that
$t \in R(U,W) \cap R(W,U), \, \forall t>t_1$,  and this implies $(T_t)_{t\geq 0}$ is mixing.

 Fix $u\in U$ and $\delta>0$ such that $B(u,2\delta)\subset U$ and $B(0,2\delta)\subset W$.
 By hypothesis, $(T_t)_{t\geq 0}$ has the SgSP, then there are $t_0>0$ and  a $T_{t_0}$-invariant set $K$ such that the restriction of $T_{t_0}$ to $K$  has the specification property and $K
 \cap B(u,\delta)\ne\emptyset$.
 From Definition \ref{SP-semigroup}, there exists $M$ (depending on $K$ and $\delta$, which we suppose $M\in\N\cdot t_0$) such that if we choose $y_1\in K\cap
 B(u,\delta)$, $y_2=0$, $s>0$ with $s\in \N\cdot t_0$,
 and $0=a_1=b_1<a_2=M <b_2=M+s$ then there exists a periodic point  $x\in K$ with period $2M + s$ such that
\begin{gather*}
 \norm{T_t (x)-T_t (y_1)}<\delta, \ a_1 \leq t \leq b_1 \,,\\
 \norm{T_t (x)-T_t (y_2)} <\delta, \ a_2 \leq t \leq b_2 \,.
 \end{gather*}
This implies $\norm{x-y_1}<\delta$, so $\norm{x-u}<2\delta$ and hence $x\in U$. From the second previous inequality, we have  $T_t (x) \in B(0,\delta)
\subset W$ for $ M \leq t \leq M+s$. Therefore $t \in R(U,W)$ for any $t \geq M$.

Taking now $t>M$, we select $t'\in [M,M+s]$ such that $t+t'\in \N\cdot (2M+s)$. We have $\norm{T_{t'}(x)}<\delta$, hence $T_{t'}(x)\in B(0,\delta)\subset W$. Since $x$ is periodic with period dividing
$2M+s$, then
$T^{t}(T^{t'}(x))=x\in U$. Therefore $t \in R(W,U)$ for any $t > M$.

We have proved that the complementary of $R(U,W) \cap R(W,U)$ is bounded and this finishes the proof.

\end{proof}

\begin{proposition}\label{SP-chaos}
Let $(T_t)_{t\geq 0}$ be a $C_0$-semigroup on a separable Banach space $X$. If  $(T_t)_{t\ge 0}$ has the SgSP then $(T_t)_{t\ge 0}$ is Devaney chaotic.
\end{proposition}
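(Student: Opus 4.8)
The plan is to check the two ingredients of Devaney chaos separately: hypercyclicity (equivalently, transitivity) and density of the set of periodic points. For the first, no new work is needed: by Proposition~\ref{SP-mixing} the SgSP implies that $(T_t)_{t\ge0}$ is mixing, and mixing trivially implies transitivity, which for a $C_0$-semigroup coincides with universality, i.e. with hypercyclicity. So this half of the statement is immediate.

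The substantive point is the density of periodic points. I would work with the dense set $\bigcup_{n\in\N}K_n$ furnished by the SgSP. Fix $y\in K_n$ and $\delta>0$; the goal is to produce a periodic point within $\delta$ of $y$. Since the specification definition requires $s\ge2$, I use the auxiliary point $0\in K_1\subseteq K_n$ as a second data point: set $y_1=y$, $y_2=0$, and choose parameters $a_1=b_1=0$, $a_2=M_{\delta,n}$, and $b_2$ any number large enough that $a_2-b_1\ge M_{\delta,n}$ and $b_2+M_{\delta,n}\in\N\cdot t_0$. The SgSP then yields a point $x\in K_n$ with $T_t(x)=x$ for $t=M_{\delta,n}+b_2>0$ — so $x$ is genuinely a periodic point of the semigroup — and, taking $t_1=a_1=b_1=0$ so that $T_{t_1}=Id$, with $\norm{x-y}=\norm{T_{t_1}(x)-T_{t_1}(y_1)}<\delta$. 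Hence periodic points are dense in $\bigcup_{n}K_n$, and therefore in $X$, which together with hypercyclicity gives Devaney chaos.

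I do not expect a real obstacle here. The only points needing a little care are: (i) the admissibility conditions on the $a_r,b_r$, in particular $b_s+M_{\delta,n}\in\N\cdot t_0$, can always be met, which is just a matter of choosing $b_2$ suitably large; and (ii) the forced choice $s=2$ is harmless precisely because $0$ belongs to every $K_n$. (Alternatively, one could deduce density of periodic points from Proposition~\ref{SP2-operators}, since the OSP for $T_{t_0}$ makes $T_{t_0}$ Devaney chaotic as an operator, but the direct argument above seems cleaner.)
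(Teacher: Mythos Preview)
Your proof is correct and follows exactly the paper's approach: transitivity via Proposition~\ref{SP-mixing}, and density of periodic points read off from the SgSP definition. The paper simply asserts the latter as ``clear from the definition,'' whereas you spell out the construction (with $y_2=0$, $a_1=b_1=0$, etc.)\ in the same spirit as the proof of Proposition~\ref{SP-mixing}; nothing further is needed.
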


\begin{proof}
By Proposition \ref{SP-mixing},  $(T_t)_{t\geq 0}$ is topologically transitive and, by the definition of SgSP, it is clear that the set of periodic points for the semigroup is dense in $X$.
\end{proof}

\begin{proposition}\label{SP-distributionally}
Let $(T_t)_{t\geq 0}$ be a $C_0$-semigroup on a separable Banach space $X$. If  $(T_t)_{t\ge 0}$ has the SgSP with respect to an increasing sequence $(K_n)_n$ of invariant compact sets, then $(T_t)_{t\ge 0}$ is distributionally
chaotic.
\end{proposition}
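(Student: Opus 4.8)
The plan is to reduce everything to the single operator $T_{t_0}$, produce a distributionally irregular vector for it, transfer that vector to the semigroup, and finally manufacture an uncountable scrambled set by scaling. By Proposition~\ref{SP2-operators}, the SgSP of $(T_t)_{t\ge 0}$ with respect to the increasing sequence $(K_n)_n$ of $T$-invariant compact sets says precisely that $T_{t_0}$ has the OSP with respect to these same compact sets.

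\emph{Step 1: a distributionally irregular vector for $T_{t_0}$.} The key input is the operator-level counterpart of the statement, established in~\cite{bartoll_martinez_peris0000operators} (see also~\cite{bernardes_bonilla_muller_peris2013distributional}): an operator with the OSP with respect to an increasing sequence of invariant compact sets admits a distributionally irregular vector. The mechanism is to use specification to build a Cauchy sequence of periodic points $x_k\in K_{n_k}$ along a geometrically growing family of time windows, alternating long blocks on which the orbit shadows $0$ with long blocks on which it shadows vectors of large norm (obtained from large multiples of a fixed nonzero periodic point, or from large-norm elements of $\bigcup_n K_n$, combined with the local boundedness $C:=\sup_{0\le t\le t_0}\norm{T_t}<\infty$ to spread a single large value over a whole window). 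Compactness of the $K_n$ is exactly what guarantees that these periodic approximants converge and that the block norm estimates are uniform. The limit vector $x_0\neq 0$ then has, for every $\eta>0$, both $\{n:\norm{T_{nt_0}x_0}<\eta\}$ and $\{n:\norm{T_{nt_0}x_0}>\eta^{-1}\}$ of upper counting density one. I expect this step to be the main obstacle; the rest is bookkeeping.

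\emph{Step 2: from discrete to continuous time.} If $\norm{T_{nt_0}x_0}<\varepsilon/C$ then $\norm{T_sx_0}=\norm{T_{s-nt_0}T_{nt_0}x_0}\le C\norm{T_{nt_0}x_0}<\varepsilon$ for all $s\in[nt_0,(n+1)t_0]$; if $\norm{T_{nt_0}x_0}>C\varepsilon^{-1}$ then $\norm{T_{nt_0}x_0}=\norm{T_{nt_0-s}T_sx_0}\le C\norm{T_sx_0}$, so $\norm{T_sx_0}>\varepsilon^{-1}$ for all $s\in[(n-1)t_0,nt_0]$. By Step 1 the integer sets $\{n:\norm{T_{nt_0}x_0}<\varepsilon/C\}$ and $\{n:\norm{T_{nt_0}x_0}>C\varepsilon^{-1}\}$ have upper counting density one, so the associated unions of length-$t_0$ intervals have upper Lebesgue density one (consecutive such intervals overlap only at endpoints, hence the Lebesgue measure on $[0,(k+1)t_0]$ is at least $t_0$ times the number of admissible $n\le k$; dividing by $(k+2)t_0$ and letting $k\to\infty$ along the relevant subsequence yields density one). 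Therefore $\uDens\{s\ge 0:\norm{T_sx_0}<\varepsilon\}=\uDens\{s\ge 0:\norm{T_sx_0}\ge\varepsilon^{-1}\}=1$ for every $\varepsilon>0$, i.e.\ $x_0$ is distributionally irregular for $(T_t)_{t\ge 0}$. (Alternatively, this transfer is subsumed by the equivalence for $C_0$-semigroups in~\cite{albanese_barrachina_mangino_peris2013distributionally}.)

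\emph{Step 3: an uncountable scrambled set.} Put $S:=\{\lambda x_0:0<\lambda\le 1\}$, which is uncountable since $x_0\ne 0$, and take $\delta:=1$. For distinct $y_1=\lambda_1x_0$, $y_2=\lambda_2x_0$ in $S$ set $c:=|\lambda_1-\lambda_2|>0$; then $\norm{T_sy_1-T_sy_2}=c\,\norm{T_sx_0}$, so $\uDens\{s\ge 0:\norm{T_sy_1-T_sy_2}\ge\delta\}=\uDens\{s\ge 0:\norm{T_sx_0}\ge c^{-1}\}=1$ and, for every $\varepsilon>0$, $\uDens\{s\ge 0:\norm{T_sy_1-T_sy_2}<\varepsilon\}=\uDens\{s\ge 0:\norm{T_sx_0}<\varepsilon/c\}=1$ by Step 2. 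Thus $S$ and $\delta$ satisfy the definition of distributional chaos, and $(T_t)_{t\ge 0}$ is distributionally chaotic.
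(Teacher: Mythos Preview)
Your argument is correct, but the route differs from the paper's. The paper does not build a distributionally irregular vector for $T_{t_0}$; instead it observes that each restriction $T_{t_0}|_{K_n}$ is a continuous self-map of a compact metric space with the specification property, invokes Oprocha's theorem~\cite{oprocha2007specification} to obtain an uncountable distributionally scrambled set inside some $K_n$ (hence distributional chaos for $T_{t_0}$ on $X$), and then applies Theorem~3.1 of~\cite{albanese_barrachina_mangino_peris2013distributionally} to pass from the single operator to the semigroup. Your approach replaces the black-box appeal to Oprocha by an explicit construction of a distributionally irregular vector (your Step~1 sketch), carries out the discrete-to-continuous transfer by hand (Step~2), and then manufactures the scrambled set by scaling (Step~3), exploiting linearity. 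What you gain is a self-contained argument that never leaves the linear setting and makes the role of the local bound $C=\sup_{0\le t\le t_0}\norm{T_t}$ explicit; what the paper gains is brevity, since Oprocha already delivers the scrambled set and the transfer theorem in~\cite{albanese_barrachina_mangino_peris2013distributionally} already packages your Step~2. One caveat on your Step~1: the precise statement ``OSP with respect to invariant compact sets $\Rightarrow$ existence of a distributionally irregular vector'' may not appear verbatim in~\cite{bartoll_martinez_peris0000operators}; it is however an immediate consequence of Oprocha's result combined with the equivalence in~\cite{bernardes_bonilla_muller_peris2013distributional}, so the citation chain is sound even if the attribution could be sharpened.
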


\begin{proof}
We first recall that for single maps on compact metric spaces, Oprocha \cite{oprocha2007specification} showed that the SP implies distributional chaos in our sense. Since there is $t_0>0$ such that $T_{t_0}$ has the OSP, and by hypothesis the associated increasing sequence $(K_n)_n$ of invariant sets consists of compact sets, then $T_{t_0}|_{K_n}$ is distributionally chaotic for every $n\in\N$, thus the operator $T_{t_0}$ is distributionally chaotic. Applying Theorem~3.1 in \cite{albanese_barrachina_mangino_peris2013distributionally} we obtain that the semigroup is distributionally chaotic.
\end{proof}

It is well-known \cite{grosse-erdmann_peris2011linear, conejero_muller_peris2007hypercyclic, mangino_peris2011frequently} that a $C_0$-semigroup is hypercyclic
(respectively, mixing, Devaney chaotic, frequently hypercyclic)  if and only if it admits a hypercyclic  (respectively, mixing, Devaney chaotic, frequently hypercyclic) discretization $(T_{t_n})_n$. In particular, it is useful for our purposes the following characterization of frequent hypercyclicity for
semigroups in  terms of the frequent hypercyclicity of some of its operators \cite{conejero_muller_peris2007hypercyclic,mangino_peris2011frequently}.

 \begin{proposition}[\cite{conejero_muller_peris2007hypercyclic, mangino_peris2011frequently}]\label{mangino_peris}
 Let $(T_t)_{t\geq 0}$ be a $C_0$-semigroup on a separable Banach
space $X$. Then the following conditions are equivalent:

(i) $(T_t)_{t\geq 0}$ is frequently hypercyclic.

(ii) For every $t > 0$ the operator $T_t$ is frequently hypercyclic.

(iii) There exists $t_0 > 0$ such that $T_{t_0}$ is frequently hypercyclic.
\end{proposition}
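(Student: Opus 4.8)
The plan is to close the cycle of implications $(i)\Rightarrow(ii)\Rightarrow(iii)\Rightarrow(i)$. The implication $(ii)\Rightarrow(iii)$ is immediate (take $t_0=1$). The implication $(i)\Rightarrow(ii)$ is exactly the transference result of Conejero, M\"uller and Peris recalled in the introduction: a frequently hypercyclic vector $x$ for the semigroup is a frequently hypercyclic vector for each operator $T_t$, $t>0$, so I would simply invoke \cite{conejero_muller_peris2007hypercyclic}. Thus the only substantial point is $(iii)\Rightarrow(i)$, and here I would show that the \emph{same} vector works: if $x$ is a frequently hypercyclic vector for some $T_{t_0}$, then $x$ is a frequently hypercyclic vector for $(T_t)_{t\ge 0}$.

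For this, first I would reduce the verification to balls: it suffices to check that $\lDens\{t\ge 0: T_tx\in U\}>0$ when $U=B(y,\varepsilon)$ is an arbitrary nonempty open ball. The key preliminary observation is that $(T_t)_{t\ge 0}$ is locally bounded: by strong continuity $s\mapsto T_sz$ is bounded on $[0,t_0]$ for each $z\in X$, so the uniform boundedness principle yields $M:=\sup_{0\le s\le t_0}\norm{T_s}<\infty$ (note $M\ge\norm{T_0}=1$). Using this together with the strong continuity of $s\mapsto T_sy$ at $s=0$, I would produce a smaller nonempty open set $U'=B(y,\varepsilon')\subset U$ with $M\varepsilon'<\varepsilon/2$ and a number $\delta\in(0,t_0)$ such that $T_s(U')\subset U$ for all $s\in[0,\delta]$; indeed, for $w\in U'$ and $s\in[0,\delta]$ one has $\norm{T_sw-y}\le M\norm{w-y}+\norm{T_sy-y}<\varepsilon$ as soon as $\delta$ is chosen small enough that $\norm{T_sy-y}<\varepsilon/2$ on $[0,\delta]$.

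Then I would transfer the discrete frequent hypercyclicity of $x$ for $T_{t_0}$ to the continuous setting by ``thickening'' each return time. Put $S:=\{n\in\N: T_{t_0}^{\,n}x\in U'\}$, which has positive lower density in $\N$ since $x$ is frequently hypercyclic for $T_{t_0}$ and $U'$ is a nonempty open set. For every $n\in S$ and every $s\in[0,\delta]$ we have $T_{nt_0+s}x=T_s(T_{nt_0}x)\in T_s(U')\subset U$, hence
\[
\{t\ge 0: T_tx\in U\}\ \supseteq\ \bigcup_{n\in S}[\,nt_0,\ nt_0+\delta\,],
\]
and since $\delta<t_0$ these intervals are pairwise disjoint, so counting the intervals contained in $[0,Nt_0]$ gives $\lDens\{t\ge 0: T_tx\in U\}\ge \tfrac{\delta}{t_0}\,\lDens(S)>0$. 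As $U$ ranged over all balls, $x$ is a frequently hypercyclic vector for $(T_t)_{t\ge 0}$, which proves $(iii)\Rightarrow(i)$ and completes the cycle.

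The two density estimates are routine; the only step that needs genuine care is the construction of $U'$ and $\delta$, that is, exploiting the local boundedness of the semigroup to gain uniformity over short time intervals — without it one cannot convert an isolated sequence of discrete return times into a set of positive Lebesgue density, so this is where I expect the main obstacle to lie. I would also remark that $(iii)\Rightarrow(i)$ can alternatively be obtained from the Pettis-integral version of the Frequent Hypercyclicity Criterion for semigroups in \cite{mangino_peris2011frequently}, but the direct argument above avoids any integration.
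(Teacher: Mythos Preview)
The paper does not give its own proof of this proposition: it is stated as a quoted result from \cite{conejero_muller_peris2007hypercyclic, mangino_peris2011frequently}, so there is nothing to compare against at the level of argument. Your proof is correct and self-contained: the cycle $(i)\Rightarrow(ii)\Rightarrow(iii)\Rightarrow(i)$ is the natural one, the only nontrivial step is $(iii)\Rightarrow(i)$, and your ``thickening'' argument---local boundedness plus strong continuity to get $T_s(U')\subset U$ for $s\in[0,\delta]$, then converting the discrete return set $S$ of positive lower density into a union of disjoint intervals of length~$\delta$---is exactly the right idea and the density estimate $\lDens\{t:T_tx\in U\}\ge(\delta/t_0)\,\lDens(S)$ goes through.

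One caution about your closing remark: saying that $(iii)\Rightarrow(i)$ ``can alternatively be obtained from the Pettis-integral version of the Frequent Hypercyclicity Criterion'' is misleading. Knowing only that some $T_{t_0}$ is frequently hypercyclic does \emph{not} imply that the semigroup (or even $T_{t_0}$ itself) satisfies the Frequent Hypercyclicity Criterion; there exist frequently hypercyclic operators which fail the criterion. So the criterion does not furnish an alternative route here---your direct argument is the right one, and you should drop that remark.
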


The implication (i)$\rightarrow$(ii) was proved in \cite{conejero_muller_peris2007hypercyclic}, (ii)$\rightarrow$(iii) is obvious, and (iii)$\rightarrow$(i) was observed in
\cite{mangino_peris2011frequently}.

We point out the connection between the frequent hypercyclicity property for semigroups and the SgSP.

\begin{proposition}\label{SP->FH}
Let $(T_t)_{t\geq 0}$ be a $C_0$-semigroup on a separable Banach space $X$. If $(T_t)_{t\ge 0}$ has the SgSP, then  $(T_t)_{t\ge 0}$ is frequently
hypercyclic.
\end{proposition}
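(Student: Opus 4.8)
The plan is to reduce the statement to the already-established discrete (operator) setting and then transfer the conclusion back to the semigroup. By Proposition~\ref{SP2-operators}, the hypothesis that $(T_t)_{t\ge 0}$ has the SgSP is equivalent to the assertion that some single operator $T_{t_0}$ (for a suitable $t_0>0$) has the OSP. Hence the proposition will follow immediately from the conjunction of two facts: (a) an operator with the OSP on a separable Banach space is frequently hypercyclic, and (b) a $C_0$-semigroup is frequently hypercyclic whenever one of its operators $T_{t_0}$ is. Fact (b) is exactly the implication (iii)$\Rightarrow$(i) in Proposition~\ref{mangino_peris}, so in this paper the whole argument is a two-line chain of citations, and the only point carrying genuine content is fact (a).

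Fact (a) is part of the operator-level study in \cite{bartoll_martinez_peris0000operators}. The idea behind it is the following: if $S$ has the OSP with respect to an increasing family $(K_n)_n$ of invariant sets with dense union, one fixes a dense sequence $(z_k)_k$ drawn from the $K_n$ and, for each $k$, a set of times $A_k\subset\N$ of positive lower density such that these sets are pairwise "well separated" (the gaps are large enough to accommodate the jump time $N_\delta$ of the specification property). One then uses the OSP to glue together, along the prescribed times in $\bigcup_k A_k$, approximations of finite pieces of orbit of the $z_k$; passing to a limit produces a vector $x$ that meets every ball $B(z_k,\varepsilon)$ along a set of times of positive lower density, so $x$ is frequently hypercyclic. (Alternatively, one verifies that $S$ satisfies the Frequent Hypercyclicity Criterion on the dense linear span of its periodic points, which the OSP supplies in abundance.)

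Applying (a) to $S=T_{t_0}$ gives that $T_{t_0}$ is frequently hypercyclic, and then (b), i.e. Proposition~\ref{mangino_peris}, yields that $(T_t)_{t\ge 0}$ is frequently hypercyclic, which is the claim of Proposition~\ref{SP->FH}.

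The main obstacle is not at the level of this proof but is entirely absorbed into the cited implication ``OSP $\Rightarrow$ frequently hypercyclic'' for operators, whose proof requires the delicate bookkeeping of gap times versus lower densities sketched above. The one compatibility point worth checking explicitly is that the OSP provided by Proposition~\ref{SP2-operators} is witnessed by the same increasing family $(K_n)_n$ of invariant sets as in Definition~\ref{SP-semigroup}; since $\overline{\bigcup_n K_n}=X$ and $0\in K_1$, the frequently hypercyclic vector built for $T_{t_0}$ lives in $X$ and no separability or density issue arises.
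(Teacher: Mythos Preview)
Your proposal is correct and follows exactly the same route as the paper: use Proposition~\ref{SP2-operators} to obtain some $T_{t_0}$ with the OSP, invoke the result from \cite{bartoll_martinez_peris0000operators} that OSP implies frequent hypercyclicity for the operator, and conclude via Proposition~\ref{mangino_peris}. The additional commentary you give on the mechanism behind ``OSP $\Rightarrow$ frequently hypercyclic'' and on the compatibility of the invariant sets is accurate but not part of the paper's proof, which is indeed the two-line chain of citations you anticipated.
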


\begin{proof}
By Proposition~\ref{SP2-operators}, if $(T_t)_{t\ge 0}$ has the SgSP, then there exists $t_0 > 0$ such that the operator $T_{t_0}$  has the OSP, then the
operator  $T_{t_0}$ is frequently hypercyclic \cite{bartoll_martinez_peris2016operators} and, therefore, the $C_0$-semigroup  $(T_t)_{t\ge 0}$ is frequently hypercyclic (see Proposition~\ref{mangino_peris} \cite{conejero_muller_peris2007hypercyclic,mangino_peris2011frequently}).
\end{proof}

The following result is a partial converse.

\begin{proposition}\label{FHC->SP2}
Let $(T_t)_{t\geq 0}$ be a $C_0$-semigroup on a separable Banach space $X$. If $(T_t)_{t\ge 0}$ satisfies the Frequent  Hypercyclicity Criterion for semigroups of
\cite{mangino_peris2011frequently}, then every operator $T_{t}$, $t>0$, has the OSP and, therefore, the  semigroup $(T_t)_{t\ge 0}$ has the SgSP.
\end{proposition}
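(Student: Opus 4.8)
The plan is to route everything through the discrete picture. Recall that the Frequent Hypercyclicity Criterion for semigroups of \cite{mangino_peris2011frequently} supplies a dense subspace $X_0\subset X$, a right-inverse family $(S_t)_{t\ge0}$ on $X_0$ (in the notation of that paper), and Pettis-integrability hypotheses guaranteeing that the maps $s\mapsto T_sx$ and $s\mapsto S_sx$, for $x\in X_0$, are unconditionally Pettis-integrable on $[0,\infty)$ and satisfy $T_tS_tx=x$. The first step is to note that these hypotheses descend, in discrete form, to every single operator $T_t$ with $t>0$: splitting the Pettis integral along the arithmetic progression $\{nt:n\in\N\}$ and setting $\widetilde{x}=\int_0^t T_sx\,ds\in X$ (and analogously $\widetilde{x}_-$ from the family $(S_s)$), one obtains that $\sum_{n\ge0}T_t^n\widetilde{x}$ and the corresponding backward series converge unconditionally, with tails controlled uniformly over finite subsets of $X_0$, and that $\widetilde{x}$ lies in the image of the appropriate right inverse. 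In other words, each $T_t$ satisfies the (discrete) Frequent Hypercyclicity Criterion of \cite{bonilla_grosse-erdmann2007frequently}; this is exactly the verification performed, for one value $t_0$, in the proof of the main result of \cite{mangino_peris2011frequently}, and it goes through verbatim for every $t>0$.

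The second step passes from the discrete Frequent Hypercyclicity Criterion to the OSP for operators. This is where the computable conditions announced just before the statement are invoked: an operator satisfying the Frequent Hypercyclicity Criterion also fulfils the specification criterion for operators, and therefore has the OSP, by the corresponding implication in \cite{bartoll_martinez_peris0000operators}. Concretely, one works with the increasing sequence of invariant sets generated by finite linear combinations of vectors of $X_0$; on each of these sets the same summable right inverses that produce a frequently hypercyclic vector can be assembled, thanks to the unconditional convergence, into periodic points that shadow finitely many prescribed finite pieces of orbits up to a uniform error $\delta$, the minimal jump time $N_\delta$ being governed by how far into the tails of the series one must go to make the error smaller than $\delta$. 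Applying this with $T_t$ in place of a generic operator, for each $t>0$, shows that every operator $T_t$ has the OSP.

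Finally, since in particular $T_{t_0}$ has the OSP for some (indeed for every) $t_0>0$, Proposition~\ref{SP2-operators} immediately yields that the semigroup $(T_t)_{t\ge0}$ has the SgSP, which completes the argument.

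I expect the only genuine obstacle to be the uniformity bookkeeping in the first step: one has to check that the unconditional Pettis-integral estimates of \cite{mangino_peris2011frequently} survive the discretization along $\{nt:n\in\N\}$ with constants independent of the chosen data, and that the operator series so obtained for $T_t$ are summable in the sense required by \cite{bartoll_martinez_peris0000operators}. One should also note that if one wants the sets $K_n$ in Definition~\ref{SP-semigroup} to be genuinely $T$-invariant rather than merely $T_{t_0}$-invariant, it suffices to replace each $K_n$ by $\overline{\bigcup_{0\le s\le t_0}T_s(K_n)}$, which preserves all the relevant properties (including $0\in K_1$ and density of the union). Everything else is routine once the two cited implications and Proposition~\ref{SP2-operators} are in place.
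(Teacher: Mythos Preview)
Your proposal is correct and follows essentially the same three-step route as the paper: semigroup FHC $\Rightarrow$ discrete FHC for each $T_t$ (via \cite{mangino_peris2011frequently}), discrete FHC $\Rightarrow$ OSP (via \cite{bartoll_martinez_peris0000operators}), and OSP for some $T_{t_0}$ $\Rightarrow$ SgSP (via Proposition~\ref{SP2-operators}). The paper's proof simply cites these implications without unpacking the Pettis-integral discretization or the invariant-set bookkeeping that you sketch, so your version is a more detailed rendering of the same argument rather than a different approach.
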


\begin{proof}
If $(T_t)_{t\ge 0}$ satisfies the Frequent  Hypercyclicity Criterion for semigroups of \cite{mangino_peris2011frequently} then every operator  $T_{t}$, $t >0$, satisfies the Frequent  Hypercyclicity Criterion for operators of \cite{bonilla_grosse-erdmann2007frequently}. By
\cite[Theorem 14]{bartoll_martinez_peris2016operators}, $T_{t}$
has the OSP for every $t > 0$, and finally,  by using Proposition~\ref{SP2-operators}, we conclude that the semigroup $(T_t)_{t\geq 0}$ has the SgSP.
\end{proof}

Corollary 2.3 in \cite{mangino_peris2011frequently} showed that under some conditions, expressed in terms of eigenvector fields for the infinitesimal
generator $A$ of the $C_0$-semigroup $(T_t)_{t\ge 0}$, the semigroup is frequently hypercyclic. More precisely, it was proved in \cite{mangino_peris2011frequently} that
it satisfies the Frequent  Hypercyclicity Criterion. Actually, the conditions on the infinitesimal generator are much easier to verify on precise examples and applications than the Frequent Hypercyclicity Criterion itself.

\begin{proposition}\label{Agenerator=>SP2} Let $(T_t)_{t\ge 0}$ be a $C_0$-semigroup on a separable complex Banach space $X$ and let
$A$ be its infinitesimal generator.
 Assume that there exists a family $(f_j)_{j\in \Gamma }$ of locally bounded measurable maps
 $f_j:I_j \rightarrow X$ such that $I_j$ is an interval in $\mathbb{R}$, $f_j(I_j)\subset D(A)$, where $D(A)$ denotes the domain of the generator,
                    $Af_j(t)=itf_j(t)$ for every $t \in I_j$, $j\in \Gamma$ and
                     $\mbox{span} \{f_j (t) \ : \ j\in \Gamma,\ t\in I_j\}$ is dense in $X$.
If either

a) $f_j \in C^2(I_j, X)$, $j\in \Gamma$,\\ or

b) $X$ does not contain $c_0$  and $\langle \varphi,f_j\rangle \in C^{1}(I_j)$, $\varphi\in X'$, $j\in\Gamma$,\\ then the semigroup $(T_t)_{t\geq 0}$ has the
SgSP.
\end{proposition}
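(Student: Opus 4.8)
The plan is to reduce the statement to Proposition~\ref{FHC->SP2}. That proposition already tells us that if $(T_t)_{t\ge 0}$ satisfies the Frequently Hypercyclic Criterion for semigroups of \cite{mangino_peris2011frequently}, then every operator $T_t$, $t>0$, has the OSP and hence the semigroup has the SgSP. So the entire task is to check that, under either hypothesis (a) or (b), the semigroup does satisfy that criterion; once this is done, the conclusion follows immediately by quoting Proposition~\ref{FHC->SP2}.

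For the reduction I would invoke Corollary~2.3 of \cite{mangino_peris2011frequently}. Its hypotheses are precisely the data assumed here: a family $(f_j)_{j\in\Gamma}$ of locally bounded measurable maps $f_j:I_j\to X$ with $f_j(I_j)\subset D(A)$, $Af_j(t)=itf_j(t)$ for $t\in I_j$, and $\mathrm{span}\{f_j(t):j\in\Gamma,\ t\in I_j\}$ dense in $X$, together with one of the two regularity conditions: either $f_j\in C^2(I_j,X)$ for all $j$, or $X$ contains no copy of $c_0$ and $\langle\varphi,f_j\rangle\in C^1(I_j)$ for all $\varphi\in X'$ and all $j$. It is shown in \cite{mangino_peris2011frequently} — by building suitable Pettis-integrable maps from the $f_j$ restricted to subintervals of the $I_j$ and integrating along the eigenvalue parameter — that under these assumptions the semigroup satisfies the Frequently Hypercyclic Criterion for semigroups; the $C^2$-smoothness in case (a), respectively the $C^1$-smoothness together with the exclusion of $c_0$ (which is what forces the relevant series to converge unconditionally) in case (b), is exactly what makes the associated integrals well defined and the resulting vector-valued series convergent.

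Chaining the two steps: hypothesis (a) or (b) $\Rightarrow$ the semigroup satisfies the Frequently Hypercyclic Criterion for semigroups (Corollary~2.3 of \cite{mangino_peris2011frequently}) $\Rightarrow$ every $T_t$, $t>0$, has the OSP and $(T_t)_{t\ge 0}$ has the SgSP (Proposition~\ref{FHC->SP2}), which is the assertion. Since the genuine analytic content is already carried out in \cite{mangino_peris2011frequently}, I do not expect a substantial obstacle; the one point that needs care is simply verifying that the hypotheses stated in the proposition are literally those of Corollary~2.3 of \cite{mangino_peris2011frequently} — in particular that ``locally bounded measurable'' and the density of the span are used there in the same form — so that the cited criterion applies without modification. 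That bookkeeping is the main, and essentially the only, thing to be checked.
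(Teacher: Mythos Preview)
Your proposal is correct and matches the paper's own proof essentially verbatim: the paper simply observes that the result follows directly from Corollary~2.3 in \cite{mangino_peris2011frequently} together with Proposition~\ref{FHC->SP2}. Your additional remarks about why the hypotheses line up are accurate but more detailed than what the paper records.
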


\begin{proof}
The result  directly follows from the Corollary 2.3 in \cite{mangino_peris2011frequently} and Proposition \ref{FHC->SP2}.
\end{proof}

It was pointed out in  \cite[Remark 2.4]{mangino_peris2011frequently} that the spectral criterion  for chaos of Desch, Schappacher and Webb \cite{desch-schappacher-webb1997hypercyclic}
for $C_0$-semigroups  has stronger requirements than the criterion given in Proposition~\ref{Agenerator=>SP2}. As a consequence, we can obtain a very useful criterion for SgSP.

\begin{corollary}\label{generatorDSW}
Assume that there exists an open connected subset $U\subset \C$ and weakly holomorphic functions $f_j: U\rightarrow X$, $j\in J$, such that
\begin{enumerate}
                    \item $U\cap i\R\not= \emptyset$,
                    \item $f_j(\lambda)\in \ker (\lambda I-A)$ for every $\lambda \in U$, $j\in J$,
                    \item for any $x^\ast\in X^\ast$, if $\langle f_j(\lambda),x^\ast\rangle=0$ for all $\lambda\in U$ and $j\in J$ then $x^\ast=0$.
\end{enumerate}
Then the semigroup $(T_t)_{t\geq 0}$ has the SgSP.
\end{corollary}

\section{Applications and examples}\label{sec_ex}

In this section we will present several examples of $C_0$-semigroups exhibiting the specification property, with particular interest in solution semigroups to certain PDEs. A characterization of translation semigroups with the SgSP is also provided.

In the following examples, in order to ensure whether the solution semigroup has the SgSP, we will check the conditions of Proposition~\ref{Agenerator=>SP2} (\emph{i.e.}, the conditions of Corollary 2.3 in \cite{mangino_peris2011frequently}) or the spectral criterion in
\cite{desch-schappacher-webb1997hypercyclic} for chaos (Corollary~\ref{generatorDSW}).

\begin{example}[The solution semigroup of the hyperbolic heat transfer equation]

 Let us consider  the hyperbolic heat transfer equation (HHTE):
$$
    \left\{
        \begin{array}{ll}
            \tau\frac{\partial^2u}{\partial t^2}+\frac{\partial u}{\partial t}=\alpha\frac{\partial^2u}{\partial x^2},\\\\
            u(0,x)=\varphi_1(x),   x\in\mathbb{R},\\\\
            \frac{\partial u}{\partial t}(0,x)=\varphi_2(x),  x\in\mathbb{R}
        \end{array}
    \right.
$$
 where $\varphi_1$ and $\varphi_2$ represent the initial temperature and the initial variation of temperature, respectively, $\alpha>0$ is the thermal
 diffusivity, and $\tau>0$ is the thermal relaxation time.

 The dynamical behaviour presented by the solutions of the classical heat equation was studied by Herzog \cite{herzog1997on} on certain spaces of analytic
 functions with certain growth control. Later, the dynamical properties of the solution semigroup for the hyperbolic heat transfer equation were also
 established in \cite{conejero_peris_trujillo2010chaotic,grosse-erdmann_peris2011linear}.

The HHTE can be expressed as a  first-order equation on the product of a certain function space with itself $X \oplus X$. We set $u_1=u$ and $u_2=\frac{\partial
u}{\partial t}$. Then the associated first-order equation is: $$
\left\{
\begin{array}{c}
\frac{\partial}{\partial t}\left(
    \begin{array}{c}
      u_1\\
      u_{2} \\
    \end{array}
  \right)=\left(
            \begin{array}{cc}
              0 & I \\
              \frac{\alpha}{\tau}\frac{\partial^2}{\partial x^2} & \frac{-1}{\tau}I \\
            \end{array}
          \right)\left(
                   \begin{array}{c}
                     u_{1} \\
                     u_2 \\
                   \end{array}
                 \right)\\\\
  \left(
    \begin{array}{c}
      u_1(0,x) \\
      u_2(0,x) \\
    \end{array}
  \right)=  \left(
    \begin{array}{c}
      \varphi_1(x) \\
      \varphi_2(x)\\
    \end{array}
  \right)   , \  x\in\mathbb{R}
 \end{array}
    \right.
$$
 We fix $\rho>0$ and consider the space (see \cite{herzog1997on})
 $$X_\rho=\{f:\mathbb{R}\rightarrow \mathbb{C}; f(x)=\sum_{n=0}^\infty\frac{a_n\rho^n}{n!}x^n, (a_n)_{n\geq0}\in c_0\}$$ endowed with the norm
 $||f||=\sup_{n\geq0}|a_n|$, where $c_0$ is the Banach space of complex sequences tending to $0$. Since
$$ 
A:=\left(
    \begin{array}{cc}
        0 & I \\
        \frac{\alpha}{\tau}\frac{\partial^2}{\partial x^2} & \frac{-1}{\tau}I \\
    \end{array}
    \right) 
$$ 
is an operator on $X:=X_\rho\oplus X_\rho$, we have that $(T_t)_{t\geq 0}=(e^{tA})_{t\geq0}$ is the $C_0$-semigroup solution of the HHTE.
  We know from \cite{conejero_peris_trujillo2010chaotic} and \cite{grosse-erdmann_peris2011linear} that, given $\alpha$, $\tau$ and $\rho$ such that
  $\alpha\tau\rho>2$, the solution semigroup $(e^{tA})_{t\geq0}$  defined on $X_\rho\oplus X_\rho$ is mixing and chaotic since it satisfies the hypothesis of
  the spectral  criterion (Corollary~\ref{generatorDSW}). Therefore, it  follows that the solution semigroup of the HHTE has the SgSP.
\end{example}

\begin{remark}
With minor changes, we can apply the previous argument to the wave equation $$\label{cauchy_problem}
\left\{\begin{array}{rl}
       \frac{\partial^2 u}{\partial t^2} &= \alpha \frac{\partial^2 u}{\partial x^2}\\
      u(0,x) & = \varphi_1(x),\quad x\in\mathbb{R}\\
      u_t(0,x) & = \varphi_2(x),\quad x\in\mathbb{R}
\end{array}
\right\}
$$ which can be expressed as  a first order equation in $X_\rho \oplus X_\rho$ (see \cite{grosse-erdmann_peris2011linear}), in order to state that its semigroup
solution has the SgSP.
\end{remark}

\begin{example}[$C_0$-semigroup solution of the Black-Scholes equation]

Black and Scholes proposed in \cite{blackscholes73} a mathematical model which gives a theoretical estimate of the price of stock options. The model is based on
a partial differential equation,  called the Black-Scholes equation, which estimates the price of the option over time. They proved that under some
assumptions about the market, the value of a stock option $u(x,t)$, as a function of the current value of the underlying asset $x\in\mathbb{R}^+=[0,\infty)$ and
time, satisfies the final value problem: 
$$
\left\{\begin{array}{ll}
\frac{\partial u}{\partial t}=-\frac{1}{2}\sigma^2x^2\frac{\partial^2u}{\partial x^2}-rx\frac{\partial u}{\partial x}+ru & \text{ in }\mathbb{R}^+\times[0,T]\\
u(0,t)=0& \text{ for } t\in[0,T]\\ 
u(x,T)=(x-p)^+&\text{ for } x \in\mathbb{R}^+
\end{array}\right.
$$
where $p>0$ represents a given strike price, $\sigma >0$ is the volatility, $r>0$ is the interest rate and 
$$
(x-p)^+=\left\{\begin{array}{ll} x-p & \text{ if
}x>p\\ 0 & \text{ if }x\leq p.
\end{array}\right.
$$

Let $v(x,t)=u(x,T-t)$, then it satisfies the forward Black-Scholes equation, defined for all time $t\in\mathbb{R}^+$ by 
$$
\left\{\begin{array}{ll}
\frac{\partial v}{\partial t}=\frac{1}{2}\sigma^2x^2\frac{\partial^2v}{\partial x^2}+rx\frac{\partial v}{\partial x}-rv & \text{ in
}\mathbb{R}^+\times\mathbb{R}^{+}\\
v(0,t)=0& \text{ for } t\in\mathbb{R}^+\\ 
v(x,0)=(x-p)^+ &\text{ for } x \in\mathbb{R}^+
\end{array}\right.
$$

This problem can be expressed in  an abstract form 
$$
\left\{\begin{array}{ll}
\frac{\partial v}{\partial t}=\mathcal{B}v, & \empty\\
v(0,t)=0, & \empty\\ 
v(x,0)=(x-p)^+ &\text{ for } x \in\mathbb{R}^+, 
\end{array}\right.   
$$
where $\mathcal{B}=(D_\nu)^2+\gamma(D_\nu)-rI$, being $D_{\nu}=\nu x\frac{\partial}{\partial x}$ with  $\nu=\frac{\sigma}{\sqrt{2}}$ and
$\gamma=\frac{r}{\nu}-\nu$.

 In \cite{goldstenmininniromanely} it was shown  that the Black-Scholes equation admits a $C_0$-semigroup solution which can be represented by $T_t:=f(tD_\nu)$, where
 $$
 f(z)=e^{g(z)} \mbox{ with }  g(z)=z^2+\gamma z-r \, , 
 $$
 and a new explicit formula for the solution of the Black-Scholes equation was given in certain spaces of functions
 $Y^{s,\tau}$ defined by
 $$
 Y^{s,\tau}=\left\{u\in C(]0,\infty [) \ ; \ \lim_{x\rightarrow\infty}\frac{u(x)}{1+x^s}=0, \quad \lim_{x\rightarrow 0}\frac{u(x)}{1+x^{-\tau}}=0\right\}, \ \ s,\tau\in\R , 
 $$
 endowed with the norm
 $$
 ||u||_{Y^{s,\tau}}=\sup_{x>0}\biggl|\frac{u(x)}{(1+x^s)(1+x^{-\tau})}\biggr|.
 $$
 Later, it was  proved in \cite{emiradgoldstein12} that the Black-Scholes semigroup is strongly continuous and chaotic for $s>1, \tau \geq 0$ with $s\nu>1$ and
 it was showed in \cite{murillo-arcila_peris2015strong} that  it satisfies the spectral criterion  (Corollary~\ref{generatorDSW}) under the
 same restrictions on the parameters and, therefore,  the
 Black-Scholes semigroup has the SgSP.
\end{example}

Deeper studies of the two previous examples, containing quantitative estimations of the parameters involving the SgSP, are provided in \cite{baranova_bartoll_martinez-gimenez_rodenas0000on,baranova_bartoll_martinez-gimenez_rodenas0000entropy}.

There exist other $C_0$-semigroups related with PDEs which present the SgSP. In fact, the examples given in \cite{murillo-arcila_peris2015strong} in the
context of strong mixing measures, satisfy either the conditions of Proposition~\ref{Agenerator=>SP2} or the spectral criterion in
\cite{desch-schappacher-webb1997hypercyclic} and, therefore, they have the SgSP. The examples  provided in \cite{murillo-arcila_peris2015strong} include  the
semigroup generated by a linear perturbation of the one-dimensional Ornstein-Uhlenbeck operator \cite{conejero_mangino2010hypercyclic}, the solution $C_0$-semigroup  of a partial differential
equation of population dynamics studied by Rudnicki \cite{rudnicki2012chaoticity}, the solution $C_0$-semigroup associated to Banasiak, Lachowicz and
Moszy{\'n}ski models of \emph{birth-and-death} processes
\cite{banasiak_lachowicz_moszynski2007chaotic,banasiak_moszynski2011dynamics}.

\begin{example}[Translation semigroup] 
Let $1\leq p<\infty$  and let $v:\R_+\rightarrow \R$ be a strictly positive locally integrable function, that is, $v$ is measurable with $\int_0^b v(x) \,
dx<\infty$ for all $b>0$. We consider the space of weighted $p$-integrable functions defined as 
$$ 
X=L^p_{v}(\R_+)=\{f:\R_+ \rightarrow \mathbb{K} \ ; \  f
\mbox{ is measurable and } \norm{f}<\infty\},\index{$L^p_{v}(\R_+)$} 
$$ 
where 
$$
\norm{f}=\Big(\int_0^\infty |f(x)|^p v(x)\, dx\Big)^{1/p}.
$$ 
The \textit{translation semigroup} is then given by 
$$ 
T_tf(x)=f(x+t),\quad t,x\geq 0. 
$$ 
This defines a $C_0$-semigroup on $L^p_{v}(\R_+)$ if and only if
there exist $M\geq 1$ and $w\in \R$ such that, for all $t\geq 0$, the following condition 
$$ 
v(x)\leq Me^{wt} v(x+t)  \ \mbox{ for almost all } \ x\geq 0 
$$ 
is satisfied. In that case, $v$ is called an \emph{admissible weight function} and we will assume in the sequel that $v$ belongs to this class of weight
functions.

For the translation semigroup  defined on  $L^p_{v}(\R_+)$, it was proved in  \cite{mangino_peris2011frequently} that $(T_t)_{t\geq 0}$ is Devaney chaotic if and
only if it satisfies the Frequent Hypercyclicity Criterion for semigroups, which in turn is equivalent to the fact  that every operator $T_t$ satisfies
the Frequent Hypercyclicity Criterion for operators. A characterization of frequently hypercyclic shifts on $\ell^p$ due to Bayart and Ruzsa \cite{bayart_ruzsa2015difference} allowed a 
more complete characterization of the frequently  hypercyclic criterion for the translation semigroup  on
$L^p_{v}(\R_+)$ given in \cite{mangino_murillo-arcila2015frequently}:

\begin{theorem}[Theorem 3.10, \cite{mangino_murillo-arcila2015frequently}]\label{mangino_murillo}
Let $v$ be an admissible weight function on $\R^+$. The following assertions are equivalent:
\begin{enumerate}
\item[\rm(1)] The translation semigroup $(T_t)_{t\geq 0}$ is frequently hypercyclic on $L^p_{v}(\R_+)$,

\item[\rm(2)]
$\int_{0}^{\infty}  v(t)dt  < \infty$,

\item[\rm(3)]
 $(T_t)_{t\geq 0}$ is Devaney chaotic on $L^p_{v}(\R_+)$,

\item[\rm(4)]
 $(T_t)_{t\geq 0}$ satisfies the Frequently Hypercyclicity Criterion.
\end{enumerate}
\end{theorem}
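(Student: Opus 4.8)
The plan is to prove the five listed conditions equivalent by closing the single cycle $(2)\Rightarrow(3)\Rightarrow(4)\Rightarrow(\mathrm{FHC})\Rightarrow(1)\Rightarrow(2)$, where $(\mathrm{FHC})$ denotes the last listed condition, namely that $(T_t)_{t\ge0}$ satisfies the Frequent Hypercyclicity Criterion. Three of these four links are routine constructions or facts already recalled in the excerpt; the implication $(1)\Rightarrow(2)$ carries the real content and is the step I expect to be the main obstacle.

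For $(2)\Leftrightarrow(3)$ I would use only a Harnack-type estimate for admissible weights. Replacing $x$ by $x-t$ in $v(x)\le Me^{wt}v(x+t)$ gives $v(x+t)\ge (e^{-wt}/M)\,v(x)$, so there are constants $0<c\le C$, depending only on $M$ and $w$, such that $c\,v(x)\le v(y)\le C\,v(x')$ whenever $0\le y-x\le 2$ and $0\le x-x'\le 2$. Applied on each unit interval this yields at once $\int_k^{k+1}v\ge c\,v(k)$ (the integrand dominates $c\,v(k)$ there) and $\|v\|_{L^\infty([k,k+1])}\le C\,v(k+1)$ (each value $v(x)$, $x\in[k,k+1]$, being dominated by $C\,v(k+1)$ via admissibility pushed forward to $k+1$), whence $c\,v(k)\le\int_k^{k+1}v\le C\,v(k+1)$. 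Summing over the relevant index set then shows that $\sum_k v(k)$ and $\int v$ are finite simultaneously, i.e.\ $(2)\Leftrightarrow(3)$.

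For $(3)\Rightarrow(4)$ I would argue directly that $\int v<\infty$ forces Devaney chaos. On the dense subspace of compactly supported continuous functions the Hypercyclicity Criterion applies along $t\to\infty$: for such a $g$, both $\|T_t g\|^p$ and the norm of the right-translation-type map $S_tg$ (with $T_tS_tg=g$) are bounded by $\|g\|_\infty^p$ times a tail integral of $v$, which tends to $0$; hence the semigroup is (mixing, in particular) topologically transitive. For dense periodicity, given $g$ supported in $[-R,R]$ and $\tau>2R$, the $\tau$-periodisation $g_\tau:=\sum_k g(\cdot-k\tau)$ is a periodic point, and since the translates involved have pairwise disjoint supports, $\|g_\tau-g\|^p\le\|g\|_\infty^p\int_{\bigcup_{k\ne0}[k\tau-R,k\tau+R]}v\to0$ as $\tau\to\infty$, again by integrability of $v$. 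Thus $(T_t)_{t\ge0}$ is Devaney chaotic. Then $(4)\Rightarrow(\mathrm{FHC})$ is precisely the Mangino--Peris equivalence for translation semigroups recalled just before the statement, and $(\mathrm{FHC})\Rightarrow(1)$ holds because satisfying the Frequent Hypercyclicity Criterion implies frequent hypercyclicity \cite{mangino_peris2011frequently,bonilla_grosse-erdmann2007frequently}.

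The heart of the argument is $(1)\Rightarrow(2)$. Assuming $(T_t)_{t\ge0}$ frequently hypercyclic, Proposition~\ref{mangino_peris} makes the single operator $T:=T_1$ frequently hypercyclic; and under the isometry $f\mapsto f\,v^{1/p}$ (equivalently, under the decomposition $L^p_v\cong\bigl(\bigoplus_k L^p([0,1])\bigr)_{\ell^p}$), $T$ becomes a weighted shift whose weights are bounded multiplication operators built from $v(x)/v(x+1)$, so that the telescoping products controlling $T^n$ are of order $v(k)^{1/p}$ in size. One is thereby reduced to the fact that a frequently hypercyclic weighted backward shift must have a summable associated series of weight products — equivalently, that for such shifts frequent hypercyclicity and Devaney chaos coincide \cite{bayart_grivaux2006frequently} — which, transcribed back, says exactly that $\sum_k\int_k^{k+1}v<\infty$, hence $\sum_k v(k)<\infty$ by the first step. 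I expect this last reduction to be the delicate point: frequent hypercyclicity is in general strictly weaker than Devaney chaos, so the conclusion cannot come from a general principle and must exploit the shift structure. Concretely, a frequently hypercyclic vector $f$ comes close to the origin and to the first coordinate direction along sets of iterates of positive lower density, which pins down the norm of the block $f|_{[k,k+1]}$ from above and below; inserting these bounds into the identity $\|f\|^p=\sum_k\|f|_{[k,k+1]}\|^p<\infty$, and using the rigidity that admissibility imposes on the ratios of consecutive blocks (which prevents the passage from a positive-density subsequence to the full sequence from failing), one forces the convergence of $\sum_k v(k)$ and closes the cycle.
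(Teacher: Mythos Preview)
The paper does not contain a proof of this theorem. The statement is quoted verbatim as Theorem~3.10 of Mangino and Murillo-Arcila \cite{mangino_murillo-arcila0000frequently}; it is used in the paper as a black box to obtain Theorem~\ref{SP_translations}, but no argument for it is given or sketched here. There is therefore nothing in the paper to compare your proposal against.

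That said, your outline is broadly the standard route and most of the links are fine. The equivalence $(2)\Leftrightarrow(3)$ via the admissibility inequality is routine (note, incidentally, that the statement as reproduced here mixes $\R_+$ and $\R$; the argument is the same in either setting once one restricts indices appropriately). For $(3)\Rightarrow(4)$ your periodisation argument is the right idea; on $L^p_v(\R_+)$ the ``compactly supported'' functions should be those supported in a bounded interval $[0,R]$, and the periodic vectors are the $\tau$-periodic extensions to $\R_+$, but the estimate you wrote goes through unchanged. The implications $(4)\Rightarrow(\mathrm{FHC})\Rightarrow(1)$ are exactly the Mangino--Peris facts cited in the text.

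The only place where your sketch is genuinely incomplete is $(1)\Rightarrow(2)$. You correctly identify that after discretising and conjugating, $T_1$ becomes a weighted backward shift on an $\ell^p$-sum of copies of $L^p([0,1])$, and you want to invoke ``frequently hypercyclic weighted shift $\Rightarrow$ chaotic''. Two cautions: first, this equivalence is known to \emph{fail} on $c_0$ (Bayart--Ruzsa), so you must explicitly use that the ambient space is an $\ell^p$-sum, not merely ``shift structure''; second, the classical Bayart--Grivaux/Bonilla--Grosse-Erdmann results are stated for scalar-valued $\ell^p$ with scalar weights, whereas here the coefficients live in $L^p([0,1])$ and the weights are multiplication operators. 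You either need to reduce further to a scalar shift (e.g.\ by testing against characteristic functions of unit intervals, which gives a genuine scalar weighted shift with weights $\bigl(\int_k^{k+1}v/\int_{k+1}^{k+2}v\bigr)^{1/p}$), or to check that the $\ell^p$ argument goes through verbatim for operator-valued weights that are bounded above and below. Your final paragraph gestures at a direct density argument but does not actually carry it out; as written it is not a proof. In the Mangino--Murillo-Arcila paper this is precisely the step where the work lies.
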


This result allows us to give a characterization of the SgSP for  the translation semigroup on the space $X=L^p_{v}(\R_+)$.

\begin{theorem} \label{SP_translations}
Let us consider the translation semigroup on the space $X=L^p_{v}(\R_+)$, where $1\leq p<\infty$ and $v:\R_+\rightarrow \R$ is an admissible weight function. The following are equivalent:

\begin{enumerate}
\item[\rm(i)]
$\int_0^{\infty} v(x) \, dx < \infty$,
\item[\rm(ii)]
$(T_t)_{t\geq 0}$ has SgSP,
\item[\rm(iii)]
$(T_t)_{t\geq 0}$ is Devaney chaotic,
\item[\rm(iv)]
 $(T_t)_{t\geq 0}$ satisfies the Frequently Hypercyclicity Criterion,
 \item[\rm(v)]
 the translation semigroup $(T_t)_{t\geq 0}$ is frequently hypercyclic.
\end{enumerate}
\end{theorem}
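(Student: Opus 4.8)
The plan is to delegate all the analytic content to the characterization of frequent hypercyclicity for translation semigroups quoted in Theorem~\ref{mangino_murillo}, and to add the two missing links involving the SgSP by means of the general results of Section~\ref{sec_sspp}. Concretely, I would prove the cycle
(ii) $\Rightarrow$ (iii), \ (iii) $\Leftrightarrow$ (iv) $\Leftrightarrow$ (v) $\Leftrightarrow$ (i), \ (iv) $\Rightarrow$ (ii),
which together give all five equivalences at once.

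For (ii) $\Rightarrow$ (iii) I would simply invoke Proposition~\ref{SP-chaos}: the SgSP forces Devaney chaos for an arbitrary $C_0$-semigroup, hence in particular for the translation semigroup. (One could instead use Proposition~\ref{SP->FH}, which already gives (ii) $\Rightarrow$ (v), and enter the cycle one step later; either route works.) For the block (iii) $\Leftrightarrow$ (iv) $\Leftrightarrow$ (v) $\Leftrightarrow$ (i) I would appeal directly to Theorem~\ref{mangino_murillo}: assertions (v), (iii), (iv) here are literally items (1) and (4) there, so the only point needing a word of justification is the identification of the one-sided condition (i), namely $\int_0^\infty v(x)\,dx<\infty$, with the integrability/summability conditions of Theorem~\ref{mangino_murillo}. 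This is harmless, since the translation semigroup on $L^p_v(\R_+)$ depends only on the restriction of $v$ to $\R_+$: on the half-line the conditions of \cite{mangino_murillo-arcila0000frequently} specialize to $\sum_{k\ge 0} v(k)<\infty$, and admissibility of $v$ (comparing $v$ on consecutive unit intervals via $v(x)\le Me^{wt}v(x+t)$) makes this equivalent to $\int_0^\infty v(x)\,dx<\infty$, which is exactly (i).

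Finally, (iv) $\Rightarrow$ (ii) is Proposition~\ref{FHC->SP2}: if $(T_t)_{t\ge 0}$ satisfies the Frequently Hypercyclicity Criterion for semigroups of \cite{mangino_peris2011frequently}, then every operator $T_t$ has the OSP, and hence, by Proposition~\ref{SP2-operators}, the semigroup has the SgSP. Chaining (ii) $\Rightarrow$ (iii) with the block of equivalences supplied by Theorem~\ref{mangino_murillo} and then (iv) $\Rightarrow$ (ii) closes the loop and yields (i) $\Leftrightarrow$ (ii) $\Leftrightarrow$ (iii) $\Leftrightarrow$ (iv) $\Leftrightarrow$ (v). There is no genuine analytic obstacle; the two things to be careful about are purely bookkeeping: first, reading ``the Frequently Hypercyclicity Criterion'' in item (iv) as the \emph{semigroup} version of \cite{mangino_peris2011frequently}, so that Proposition~\ref{FHC->SP2} applies verbatim; and second, the passage between the two-sided ($\R$) and one-sided ($\R_+$) formulations when quoting Theorem~\ref{mangino_murillo}, which is settled by the admissibility argument above. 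The hard direction, that one-sided integrability of $v$ produces the criterion, is precisely the content of \cite{mangino_murillo-arcila0000frequently} and is used as a black box.
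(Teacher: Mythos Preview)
Your proposal is correct and follows essentially the same route as the paper: the paper's proof simply invokes Theorem~\ref{mangino_murillo} together with Propositions~\ref{SP->FH} and~\ref{FHC->SP2} to close the loop, which is exactly what you do (your use of Proposition~\ref{SP-chaos} for (ii)~$\Rightarrow$~(iii) instead of Proposition~\ref{SP->FH} for (ii)~$\Rightarrow$~(v) is an inessential variation that you yourself flag). Your extra paragraph reconciling the one-sided integrability condition $\int_0^\infty v(x)\,dx<\infty$ with the two-sided formulation in Theorem~\ref{mangino_murillo} is a point of care the paper glosses over, but it does not change the structure of the argument.
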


\begin{proof}
By Theorem \ref{mangino_murillo} \cite{mangino_murillo-arcila2015frequently} and Propositions \ref{SP->FH} and \ref{FHC->SP2}, it is obvious that  for the
translation semigroup the SgSP is equivalent to satisfy the Frequently Hypercyclicity Criterion and the SgSP is equivalent to frequently hypercyclic.
\end{proof}
\end{example}

\section*{Acknowledgements}

The authors were supported by  MINECO,  Projects MTM2013-47093-P and MTM2016-75963-P. The second and third authors were also supported by Generalitat Valenciana, Projects PROMETEOII/2013/013 and PROMETEO/2017/102.






\begin{thebibliography}{99}


\bibitem{albanese_barrachina_mangino_peris2013distributionally}
A.A. Albanese, X.Barrachina, E.M. Mangino and A.Peris, \emph{Distributional chaos for strongly continuous semigroups of operators}, Commun. Pure Appl. Anal. 12 (2013), pp. 2069--2082.

\bibitem{aroza_kalmes_mangino2014chaotic}
J. Aroza, T. Kalmes and E. Mangino, \emph{Chaotic $C_0$-semigroups induced by semiflows in Lebesgue and Sobolev spaces}, J. Math. Anal. Appl. 412 (2014), pp. 77--98.

\bibitem{badea_grivaux2007unimodular}
C. Badea and S. Grivaux, \emph{ Unimodular eigenvalues, uniformly distributed sequences and linear dynamics}, Adv. Math. 211 (2007), pp. 766--793.

\bibitem{banasiak_lachowicz_moszynski2007chaotic}
J. Banasiak, M. Lachowicz, and M. Moszy{\'n}ski, \emph{Chaotic behaviour of
  semigroups related to the process of gene amplification-deamplification with
  cell proliferation},  {Math. Biosci.} 206 (2007),   pp. 200--215.


\bibitem{banasiak_moszynski2011dynamics} J. Banasiak and M. Moszy\'{n}ski, \emph{Dynamics of birth-and-death processes with proliferation-stability and chaos},
{Discrete Contin. Dyn. Syst.} 29 (2011), pp. 67--79.

\bibitem{baranova_bartoll_martinez-gimenez_rodenas0000on}
O.~Baranova, S.~Bartoll, F.~Mart{\'{\i}}nez-Gim{\'e}nez and F.~Rodenas, \emph{On
  the dynamics associated to the {B}lack-{S}choles equation}. Preprint.

\bibitem{baranova_bartoll_martinez-gimenez_rodenas0000entropy}
O.~Baranova, S.~Bartoll, F.~Mart{\'{\i}}nez-Gim{\'e}nez and F.~Rodenas, \emph{Entropy and chaos for
  the heat equation}. Preprint.

\bibitem{bartoll_martinez_peris2012the}
S. Bartoll, F. Mart\'{\i}nez-Gim\'{e}nez and A. Peris, \emph{The specification property for backward shifts}, J. Difference Equ. and Appl. 18
(2012), pp. 599--605.

\bibitem{bartoll_martinez_peris2016operators}
S. Bartoll, F. Mart\'{\i}nez-Gim\'{e}nez and A. Peris, \emph{Operators with the specification property}, J. Math. Anal. Appl. 436 (2016), pp. 478--488.


\bibitem{bayart_bermudez2009semigroups}
F. Bayart and T. Bermúdez,
\emph{ Semigroups of chaotic operators},
Bull. Lond. Math. Soc. 41 (2009), pp. 823--830.

\bibitem{bayart_grivaux2006frequently}
F. Bayart and S. Grivaux, \emph{Frequently hypercyclic operators}, Trans. Amer. Math. Soc. 358 (2006), pp. 5083--5117.

\bibitem{bayart_matheron2009dynamics}
F. Bayart and {\'E}. Matheron, \emph{Dynamics of Linear Operators.} Cambridge Tracts in Mathematics, Vol. 179,
  Cambridge University Press, Cambridge 2009.
  
\bibitem{bayart_ruzsa2015difference} 
F. Bayart and I. Z. Ruzsa, \emph{Difference sets and frequently hypercyclic weighted shifts},
Ergodic Theory Dynam. Systems 35 (2015), pp. 691--709.


\bibitem{bermudez_bonilla_conejero_peris2005hypercyclic}
T. Berm\'{u}dez, A. Bonilla, J.A. Conejero, and A. Peris, \emph{Hypercyclic,
  topologically mixing and chaotic semigroups on {B}anach spaces}, Studia Math.
  170 (2005), pp. 57--75.


\bibitem{bernardes_bonilla_muller_peris2013distributional}
N.C. Bernardes Jr., A. Bonilla, V. M{\"u}ller and A. Peris,
\emph{Distributional chaos for linear operators},
J. Funct. Anal. 265 (2013), pp. 2143--2163.


\bibitem{bernardes_bonilla_muller_peris2015li}
N.C. Bernardes Jr., A. Bonilla, V. M{\"u}ller and A. Peris,
\emph{Li–Yorke chaos in linear dynamics}, Ergodic Theory Dynam. Systems 35  (2015), pp. 1723–-1745.


\bibitem{blackscholes73} F. Black and M. Scholes, The pricing of options and corporate liabilities,
\emph{J. Polit. Econ.} 81 (1973), pp. 637--654.




\bibitem{bonet_martinez-gimenez_peris2001a}
J. Bonet, F. Mart{\'{\i}}nez-Gim\'{e}nez, and A. Peris, \emph{A {B}anach space
  which admits no chaotic operator}, Bull. London Math. Soc. 33 (2001), pp.
  196--198.

\bibitem{bonilla_grosse-erdmann2007frequently}
A. Bonilla and K.-G. Grosse-Erdmann, \emph{Frequently hypercyclic operators and vectors}, Ergodic Theory Dynam. Systems 27 (2007), pp. 383--404.

\bibitem{bowen1970topological}
R. Bowen, \emph{Topological entropy and axiom {${\rm A}$}}, in \emph{Global
  {A}nalysis ({P}roc. {S}ympos. {P}ure {M}ath., {V}ol. {XIV}, {B}erkeley,
  {C}alif., 1968)}, Amer. Math. Soc., Providence, R.I.,  1970, pp. 23--41.

\bibitem{bowen1972periodic}
R. Bowen, \emph{Periodic orbits for hyperbolic flows}, Amer. J. Math. 94 (1972), pp. 1--30.

\bibitem{chakir_el-mourchid2018strong} 
M. Chakir and S. EL Mourchid, \emph{Strong mixing Gaussian measures for chaotic semigroups}, J. Math. Anal. Appl. 459 (2018),  pp. 778--788.

\bibitem{conejero_lizama_murillo-arcila2017chaotic}
J. A. Conejero, C. Lizama and M. Murillo-Arcila, \emph{Chaotic semigroups from second order partial differential equations}, J. Math. Anal. Appl. 456 (2017), pp. 402--411.

\bibitem{conejero_lizama_murillo-arcila_peris2017linear}
J. A. Conejero, C. Lizama, M. Murillo-Arcila and A. Peris, \emph{Linear dynamics of semigroups generated by differential operators}, Open Math.
15 (2017), pp. 745-–767.

\bibitem{conejero_mangino2010hypercyclic} J. A. Conejero and E. M. Mangino, \emph{Hypercyclic semigroups generated by Ornstein-Uhlenbeck
operators},  {Mediterr. J. Math.} 7 (2010), pp. 101--109.

\bibitem{conejero_muller_peris2007hypercyclic}
J. A. Conejero, V. M\"{u}ller, and A. Peris, \emph{ Hypercyclic behaviour of operators in a hypercyclic $C_0$-semigroup}, J. Funct. Anal. 244 (2007), pp.
342--348.

\bibitem{conejero_peris2009hypercyclic} J. A. Conejero and A. Peris, \emph{Hypercyclic translation $C_0$-semigroups on complex sectors}, {Discrete
Contin. Dyn. Syst.} 25 (2009), pp. 1195--1208.

\bibitem{conejero_peris_trujillo2010chaotic} J. A. Conejero, A. Peris and M. Trujillo, \emph{Chaotic asymptotic behaviour of the hyperbolic heat transfer
    equation solutions},
Internat. J. Bifur. Chaos Appl. Sci. Engrg.  20 (2010), pp. 2943--2947.

\bibitem{costakis_peris2002hypercyclic}
G. Costakis and A. Peris, \emph{Hypercyclic semigroups and somewhere dense orbits},
C. R. Math. Acad. Sci. Paris 335 (2002), pp. 895--898.


\bibitem{costakis_sambarino2004topologically}
G. Costakis and M. Sambarino, \emph{Topologically mixing hypercyclic
  operators}, Proc. Amer. Math. Soc. 132 (2004), pp. 385--389.


\bibitem{desch-schappacher-webb1997hypercyclic}
W. Desch, W. Schappacher, and G. F. Webb, \emph{Hypercyclic and chaotic semigroups of linear operators}, Ergodic Theory Dynam. Systems 17 (1997), pp. 793--819.

\bibitem{emiradgoldstein12}      H. Emamirad, G. Goldstein and J. A. Goldstein, \emph{Chaotic solution for the {B}lack-{S}choles equation},
{Proc. Amer. Math. Soc.} 140 (2012), pp. 2043--2052.


\bibitem{godefroy_shapiro1991operators}
G. Godefroy and J.H. Shapiro, \emph{Operators with dense, invariant, cyclic
  vector manifolds}, J. Funct. Anal. 98 (1991), pp. 229--269.

\bibitem{goldstenmininniromanely} J.~A. Goldstein, R.~M. Mininni and S. Romanelli, \emph{A new explicit formula for the solution of the Black-Merton- Scholes equation}, {Infinite Dimensional Stochastic Analysis}, World Series Publ., (2008), pp. 226--235.

\bibitem{grivaux2005hypercyclic}
S. Grivaux, \emph{Hypercyclic operators, mixing operators, and the bounded
  steps problem}, J. Operator Theory 54 (2005), pp. 147--168.


\bibitem{grosse-erdmann_peris2011linear}
K.G. Grosse-Erdmann and A. Peris, \emph{Linear Chaos}. Universitext, Springer-Verlag London Ltd., London, 2011.

\bibitem{herzog1997on}
G. Herzog, \emph{On a universality of the heat equation}, Math. Nachr. 188 (1997), pp. 169--171.

\bibitem{kalmes2010hypercyclicity} T. Kalmes, \emph{Hypercyclicity and mixing for cosine operator functions generated by
second order partial differential operators},  {J. Math. Anal. Appl.} 365 (2010), pp. 363--375.


\bibitem{lampart_oprocha2009shift}
M. Lampart and P. Oprocha, \emph{Shift spaces, {$\omega$}-chaos and
  specification property}, Topology Appl. 156 (2009), pp. 2979--2985.

\bibitem{mangino_peris2011frequently}
E. M. Mangino and A. Peris, \emph{Frequently hypercyclic semigroups}, Studia Math. 202 (2011), pp. 227--242.

\bibitem{mangino_murillo-arcila2015frequently}
E. M. Mangino and M. Murillo-Arcila, \emph{Frequently hypercyclic translation semigroups},  Studia Math. 227 (2015), pp. 219--238.


\bibitem{murillo-arcila_peris2015strong}
M. Murillo-Arcila and A. Peris, \emph{Strong mixing measures for $C_0$-semigroups}, Rev. R. Acad. Cienc. Exactas F\'\i s. Nat. Ser. A Math. RACSAM  109 (2015), pp. 101--115.


\bibitem{oprocha2007specification}
P. Oprocha, \emph{Specification properties and dense distributional chaos},
  Discrete Contin. Dyn. Syst. 17 (2007), pp. 821--833.

\bibitem{oprocha_stefankova2008specification}
P. Oprocha and M. {\v{S}}tef{\'a}nkov{\'a}, \emph{Specification property and
  distributional chaos almost everywhere}, Proc. Amer. Math. Soc. 136 (2008),
  pp. 3931--3940.

\bibitem{peris_saldivia2005syndetically}
A. Peris and L. Saldivia, \emph{Syndetically hypercyclic operators}, Integral
  Equations Operator Theory 51 (2005), pp. 275--281.

\bibitem{rudnicki2012chaoticity} R. Rudnicki, \emph{Chaoticity and invariant measures for a cell population model},
{J. Math. Anal. Appl.} 339 (2012), pp. 151--165.

\bibitem{sigmund1974on}
K. Sigmund, \emph{On dynamical systems with the specification property}, Trans.
  Amer. Math. Soc. 190 (1974), pp. 285--299.


\bibitem{yin_wei2018recurrence}
Z. Yin and Y. Wei, \emph{Recurrence and topological entropy of translation operators}, J. Math. Anal. Appl. 460 (2018), pp. 203-–215.







\end{thebibliography}
\end{document}